\documentclass[11pt]{article}
\setlength{\skip\footins}{0.6cm}

\usepackage[utf8]{inputenc}
\usepackage{lmodern}
\usepackage{subfiles}
\usepackage{enumitem}
	\setenumerate{label={\normalfont(\roman*)}, itemsep=0em} 
        
\usepackage{amsfonts}
\usepackage{amsthm}
\usepackage{amsmath}
\usepackage{amssymb}
\usepackage{amscd}
\usepackage{mathrsfs}
\usepackage{mathtools}
\usepackage{bm}
\usepackage{esint}

\usepackage[margin=3cm]{geometry}
\usepackage{indentfirst}
\usepackage{graphicx}
\usepackage{graphics}
\usepackage{lscape}
\usepackage{tikz-cd}
\usepackage{color}
\usepackage{pict2e}
\usepackage{epic}
\usepackage{epstopdf}
\usepackage{titlesec, titlefoot}
	\titleformat{\section}[block]{\Large\bfseries\filcenter}{\thesection}{1em}{}
\usepackage{commath}
\usepackage{float}
\usepackage{caption}
\usepackage{etoolbox}
\usepackage[affil-it]{authblk}
\usepackage{combelow}

\usepackage[hidelinks]{hyperref}
\hypersetup{bookmarksopen=true} 
\usepackage{hypcap}

\graphicspath{{./Pictures/}}
\allowdisplaybreaks


\usepackage{listings}
\lstset{
	language=Mathematica,
	basicstyle=\small\sffamily,
	numberstyle=\tiny,
	columns=fullflexible,
	showstringspaces=false
}

\usepackage{fancybox}
\makeatletter
\makeatother



\theoremstyle{plain}

\renewcommand*\thesection{\arabic{section}}
\numberwithin{equation}{section} 

\theoremstyle{plain}
\newtheorem{thm}{Theorem}

\newtheorem{lemma}[thm]{Lemma}

\numberwithin{thm}{section} 
\newtheorem{corollary}{Corollary}
\newtheorem{theorem}{Theorem}

\theoremstyle{definition}

\newtheorem{definition}[thm]{Definition}

\newcommand{\thistheoremname}{}
\newtheorem{genericthm}[equation]{\thistheoremname}

\newcommand{\thistheoremnames}{}
\newtheorem*{genericthms}{\thistheoremnames}
\newenvironment{para*}[1]
  {\renewcommand{\thistheoremnames}{#1}%
   \begin{genericthms}}
  {\end{genericthms}}

\expandafter\let\expandafter\oldproof\csname\string\proof\endcsname
\let\oldendproof\endproof
\renewenvironment{proof}[1][\proofname]{%
  \oldproof[\upshape \bfseries #1]%
}{\oldendproof}

\makeatletter
\def\@makechapterhead#1{%
  \vspace*{50\p@}%
  {\parindent \z@ \raggedright \normalfont
    \interlinepenalty\@M
    \Huge\bfseries  \thechapter.\quad #1\par\nobreak
    \vskip 40\p@
  }}
\makeatother

\newcommand{\reqnomode}{\tagsleft@false}
\vfuzz3pt 
\hfuzz2pt 
\textwidth = 14.7 cm
\textheight = 22.4 cm 
\oddsidemargin = 0.6cm
\evensidemargin = 1cm 
\topmargin = 1mm
\headheight = 0.2 cm
\headsep = 0.7 cm

\def \d{\,{\rm d}}

\allowdisplaybreaks
\makeatletter
\DeclareRobustCommand*{\bfseries}{%
  \not@math@alphabet\bfseries\mathbf
  \fontseries\bfdefault\selectfont
  \boldmath
}

\makeatother

\newlength{\defbaselineskip}
\setlength{\defbaselineskip}{\baselineskip}


\newcommand\eps\varepsilon
\def\mean#1{\mathchoice%
          {\mathop{\kern 0.2em\vrule width 0.6em height 0.69678ex depth -0.58065ex
                  \kern -0.8em \intop}\nolimits_{\kern -0.4em#1}}%
          {\mathop{\kern 0.1em\vrule width 0.5em height 0.69678ex depth -0.60387ex
                  \kern -0.6em \intop}\nolimits_{#1}}%
          {\mathop{\kern 0.1em\vrule width 0.5em height 0.69678ex
              depth -0.60387ex
                  \kern -0.6em \intop}\nolimits_{#1}}%
          {\mathop{\kern 0.1em\vrule width 0.5em height 0.69678ex depth -0.60387ex
                  \kern -0.6em \intop}\nolimits_{#1}}}

\numberwithin{equation}{section}
\setlength{\delimitershortfall}{-0.1pt}
\allowdisplaybreaks[4]



\def\eqn#1$$#2$${\begin{equation}\label#1#2\end{equation}}
\delimitershortfall=-0.1pt
\newcommand\R{\mathbb{R}}

\newcommand{\Div}{\mathrm{Div}}
\newcommand{\F}{\mathscr F}

\def \tp{\textup}
\def \p{\partial}
\def \e{\varepsilon}
\def \D{\mathrm{D}}

\def \M{\mathbb M}
\def \LL{\mathrm L}

\def \WW{\mathrm{W}}
\def \BV{\mathrm{BV}}
\def \wstar {\overset{\ast}{\rightharpoonup}}

\newcommand\restr[2]{{
  \left.\kern-\nulldelimiterspace 
  #1 
  \vphantom{|} 
  \right|_{#2} 
  }}

\title{On the validity of the Euler-Lagrange system without growth assumptions}
\author[1]{Lukas Koch}
\author[2]{Jan Kristensen}

\affil[1]{\small Max Planck Institute for Mathematics in the Sciences, 04103 Leipzig,Germany
\protect \\
  {\tt{lkoch@mis.mpg.de}}
  \vspace{1em} \ }

\affil[2]{\small University of Oxford, Andrew Wiles Building Woodstock Rd, Oxford OX2 6GG, United Kingdom 
\protect \\
  {\tt{kristens@maths.ox.ac.uk}}
  \vspace{1em} \ }

\usepackage{etoolbox}
\makeatletter
\patchcmd{\@adjustvertspacing}
  {\jot\baselineskip \divide\jot 4}
  {\jot=.5\baselineskip}
  {}{}
\makeatother

\makeatother
\begin{document}
\maketitle

\begin{abstract}
The constrained minimisers of convex integral functionals of the form
\begin{align*}
\F(v)=\int_\Omega F(\nabla^k v(x))\d x
\end{align*}
defined on Sobolev mappings $v\in \WW^{k,1}_g(\Omega , \R^N )\cap K$, where $K$ is a closed convex subset of the Dirichlet class  $\WW^{k,1}_{g}(\Omega , \R^N )$ are
characterised as the energy solutions to the Euler-Lagrange inequality for $\F$. We assume that the essentially smooth integrand
$F\colon \R^{N} \otimes \odot^{k}\R^{n} \to \R\cup\{+\infty\}$ is convex, lower semi-continuous, proper and at least super-linear at infinity.
In the unconstrained case $K=\WW^{k,1}_{g}(\Omega , \R^N )$, if the integrand $F$ is convex, real-valued, and satisfies a demi-coercivity condition, then
$$
\int_{\Omega} \! F^{\prime}(\nabla^{k} u) \cdot \nabla^{k}\phi \, \d x =0
$$
holds for all $\phi \in \WW_{0}^{k}( \Omega , \R^{N})$, where $\nabla^{k} u$ is the absolutely continuous part of the vector measure $D^{k}u$.
\end{abstract}
\section{Introduction and results}
Let $\Omega$ be an open and bounded subset of $\R^n$, $g\in \WW^{k,1}(\Omega) \equiv \WW^{k,1}(\Omega,\R^N )$, and $K$ a closed convex subset on $\WW^{k,1}( \Omega )$. 
We consider the functional
\begin{align}\label{eq:def}
\F(v)=\int_\Omega F(\nabla^k v)\d x
\end{align}
defined on $K\cap \WW^{k,1}_g(\Omega)$, where $\WW^{k,1}_{g}( \Omega ) \equiv g+\WW^{k,1}_{0}( \Omega )$ is the Dirichlet class determined by $g$.
Denoting $\mathbb{M} \equiv \R^{N} \otimes \odot^{k}\R^{n}$, we assume that $F \colon \mathbb{M} \to \R \cup \{ +\infty \}$ is a convex, lower semicontinuous
extended real-valued integrand satisfying moreover the coercivity condition
\begin{align}\label{eq:superlinearity}
F(\xi)\geq \theta(|\xi|)\tag{H1}
\end{align}
for all $\xi\in \M$, where $\theta\colon[0,\infty)\to [0,\infty)$ is an increasing convex function satisfying
\begin{align*}
\frac{\theta(t)}{t}\to \infty \text{ as } t\to\infty.
\end{align*}
We will also be interested in relaxing the coercivity condition \eqref{eq:superlinearity} to demi-coercivity, namely
\begin{align}\label{eq:demiCoerc}
F(\xi)\geq a(\xi)+c_1|\xi|-c_2\tag{H2}
\end{align}
for some linear $a$ and some constants $c_1>0$, $c_2\in \R$.

We remark that under \eqref{eq:superlinearity} the pointwise definition \eqref{eq:def} agrees with relaxed definitions of $\F(v)$ in the
sense of Lebesgue-Serrin-Marcellini type definitions, see e.g. \cite{Fonseca1997}. Further, the existence of minimisers in this set-up follows
from the direct method. Under \eqref{eq:demiCoerc}, we need to work with a Lebesgue-Serrin-Marcellini type relaxed version of the functional and consider
minimisers in $\BV^k(\Omega) \equiv \BV^{k}( \Omega , \R^{N})$, consisting of integrable maps $u \colon \Omega \to \R^{N}$ whose distributional derivatives
up to and including $k$-th order are bounded Radon measures on $\Omega$. To be precise, we will define:
Let $\Omega\Subset\Omega'$ and $g\in W^{k,1}(\Omega')$ such that $\int_{\Omega'\setminus\Omega} F(\nabla^k g)\d x<\infty$. We then define for $u\in \BV^k(\Omega)$,
\begin{align*}
\F(u)&=\inf\big\{\liminf \int_{\Omega'}F(\nabla^k u_j)\d x\colon (u_j)\in X\big\}\\
X&=\{(u_j)\colon u_j\in K\cap\WW^{k,1}(\Omega'),\nabla^i u_j\wstar \nabla^i u \text{ in } \BV(\Omega) \text{ for } i=0,\ldots,k-1,\\
&\qquad u_j\to g \text{ in } \WW^{k,1}(\Omega'\setminus\Omega)\}.
\end{align*}
Note that if $u\in K\cap\WW^{k,1}_g(\Omega)$, then $\F(u)=\int_\Omega F(\nabla^k u)\d x+\int_{\Omega'\setminus\Omega} F(\nabla^k g)\d x$, so that $\F$ provides
a suitable extension to $\BV^k(\Omega)$.

As the precise definitions of extremal and minimiser are important in this paper, we recall the relevant them here.
\begin{definition}
A mapping $u\in \BV^k(\Omega)$ is a minimiser of $\F$ if $\F(u)\leq \F(v)$ for all ${v\in \BV^k(\Omega)}$. If $F$ satisfies \eqref{eq:superlinearity}
(and we set $\F(v)=\infty$ if $v\in \BV^k(\Omega)\setminus \WW^{k,1}(\Omega)$), this is equivalent to saying that a mapping ${u\in \WW^{k,1}_g(\Omega)\cap K}$
is a minimiser if $F(\nabla^k u)\in \LL^1(\Omega)$ and
$$
\int_\Omega F(\nabla^k u)\d x\leq \int_\Omega F(\nabla^k v)\d x
$$
for any $v\in \WW^{k,1}_g(\Omega)\cap K$.
\end{definition}
\begin{definition}
A mapping $u\in \WW^{k,1}_g(\Omega)\cap K$ is an energy-extremal if $F'(\nabla^k u)\in \LL^1(\Omega)$ satisfies
$$
\int_\Omega \big\langle F'(\nabla^k u),\nabla^k(v-u)\big\rangle \d x\geq 0
$$
for any $v\in K\cap \WW^{k,\infty}_g(\Omega)$. Note that this entails that $\langle F'(\nabla^k u),\nabla^k u\rangle \in \LL^1(\Omega)$. Here $\nabla^k$ denotes the absolutely continuous part with respect to Lebesgue measure of the gradient. If $u\in W^{k,1}(\Omega)$, this agrees with the usual definition of the gradient.
\end{definition}

In our set-up, where we only consider convex autonomous integrands, it is easy to see that energy-extremals must be minimisers. The reverse question is considerably more difficult and the answer is delicate already in the case of convex, real-valued integrands satisfying $p$-growth. We do not discuss the issue of integrands with $(p,q)$-growth further here but refer to \cite{Serrin1964,Iwaniec1994,Lewis1993,Mingione2006,Mingione2021,Koch2020,
Koch2021a} for further discussion and references. The question for convex integrands without growth conditions has been studied in \cite{Carozza2015}. Our results here extend and strengthen the results of \cite{Carozza2015} in several directions. Whereas \cite{Carozza2015} considered convex real-valued integrands of first order with superlinear growth at infinity, we consider convex extended real-valued integrands of $k$-th order with superlinear growth at infinity and moreover allow to restrain the values of the integrand within a weakly  closed subset of $\WW^{1,1}(\Omega)$. Further, we are able, in the unconstrained set-up, to relax the super-linear growth to the linear bound \eqref{eq:demiCoerc}.

Considering constrained integrands with super-linear growth, our main result is a direct translation of the main theorem in \cite{Carozza2015} generalizing it simultaneously to
extended-realvalued integrands.
\begin{theorem}\label{thm:mainConstrIntro}
Suppose $F\colon \M\to \R\cup\{ +\infty\}$ is convex, lower semi-continuous, proper, essentially smooth and satisfies \eqref{H1Constr}.
Let $g\in \WW^{k,1}(\Omega)\cap K$ with $F(s\nabla^k g)\in \LL^1(\Omega)$ for some $s>1$. Then minimisers $u\in \WW^{k,1}_g(\Omega)\cap K$ are characterised by the conditions
\begin{align*}
F^\ast(F'(\nabla^k u)) \in \LL^1(\Omega),\qquad \langle F'(\nabla^k u),\nabla u)\in \LL^1(\Omega)
\end{align*}
and
\begin{align*}
\int_\Omega F'(\nabla^k u)\nabla^k(u-v)\geq 0 \text{ for all } v\in K \text{ such that } u-v\in\WW^{k,\infty}_0(\Omega).
\end{align*}
\end{theorem}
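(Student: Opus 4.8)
The plan is to establish both implications in the characterisation; the substance lies in showing that a minimiser satisfies the stated conditions, and for this I would follow the strategy of \cite{Carozza2015}, adapted to the $k$-th order, constrained and extended-real-valued setting. Two structural facts will be used repeatedly. Since $F$ is proper, essentially smooth and satisfies \eqref{H1Constr}, the set $\operatorname{dom}F$ has non-empty interior containing the origin, and testing the definition of $F^\ast$ against $R\sigma/\snr{\sigma}$ for a ball $\overline{B}_R\subset\operatorname{dom}F$ yields the linear lower bound $F^\ast(\sigma)\geq R\snr{\sigma}-C_R$; in particular $F^\ast(F'(\nabla^k u))\in\LL^1(\Omega)$ already forces $F'(\nabla^k u)\in\LL^1(\Omega)$, so the variational inequality is meaningful. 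Secondly, at every point of differentiability of $F$ one has equality in the Fenchel--Young inequality, i.e. $F^\ast(F'(\nabla^k u))=\langle F'(\nabla^k u),\nabla^k u\rangle-F(\nabla^k u)$ a.e.; since $F(\nabla^k u)\in\LL^1(\Omega)$ whenever $u$ is a minimiser, the two integrability conditions in the statement are therefore equivalent. The converse implication (extremal $\Rightarrow$ minimiser) is the easy one: for a competitor $v$ with $\F(v)<\infty$ the subgradient inequality gives $F(\nabla^k v)\geq F(\nabla^k u)+\langle F'(\nabla^k u),\nabla^k(v-u)\rangle$ a.e., and after reducing --- via the Lebesgue--Serrin--Marcellini construction of $\F$ and by approximating $v$ by convex combinations with $u$ --- to competitors for which the linear term is integrable and the inequality applies, integrating and passing to the limit gives $\F(u)\leq\F(v)$, the integrability conditions controlling the linear term.

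For the main implication I would fix a minimiser $u$ and use, for $t\in(0,1)$, the inner variations $u_t:=(1-t)u+tg\in\WW^{k,1}_g(\Omega)\cap K$, which are admissible precisely because $K$ is convex and $g\in K$; set $\zeta_t:=\nabla^k u_t=(1-t)\nabla^k u+t\nabla^k g$. Since $0\in\operatorname{int}(\operatorname{dom}F)$ while $s\nabla^k g(x)\in\operatorname{dom}F$ for a.e. $x$ and $s>1$, the identity $\nabla^k g(x)=\tfrac1s(s\nabla^k g(x))+(1-\tfrac1s)0$ shows $\nabla^k g(x)\in\operatorname{int}(\operatorname{dom}F)$ for a.e. $x$, and by convexity $F(\nabla^k g)\leq\tfrac1s F(s\nabla^k g)+(1-\tfrac1s)F(0)\in\LL^1(\Omega)$. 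Minimality reads $\int_\Omega t^{-1}(F(\nabla^k u)-F(\zeta_t))\d x\leq0$; for a.e. $x$ the function $t\mapsto F(\zeta_t(x))$ is finite and convex on $[0,1]$, so the integrand increases as $t\downarrow0$ towards $-h'(0^+)$ with $h(t):=F(\zeta_t(x))$, and is bounded below by the $\LL^1$-minorant $F(\nabla^k u)-F(\nabla^k g)$.

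\textbf{Step A} is to show $\nabla^k u\in\operatorname{int}(\operatorname{dom}F)$ a.e. (equivalently, that $F$ is differentiable at $\nabla^k u(x)$ a.e.). If this failed on a set of positive measure, then there $\zeta_t(x)\to\nabla^k u(x)\in\partial(\operatorname{dom}F)$ while $\nabla^k g(x)$ stays interior, so $\nabla^k g(x)-\nabla^k u(x)$ points into $\operatorname{dom}F$ and essential smoothness gives $\snr{F'(\zeta_t(x))}\to\infty$ with $\langle F'(\zeta_t),\nabla^k g-\nabla^k u\rangle\to-\infty$, i.e. $-h'(0^+)=+\infty$ there; Fatou's lemma applied to $t^{-1}(F(\nabla^k u)-F(\zeta_t))$ then contradicts $\int_\Omega t^{-1}(F(\nabla^k u)-F(\zeta_t))\d x\leq0$. \textbf{Step B} establishes the integrability. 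From the subgradient inequality $F(\nabla^k u)\geq F(\zeta_t)+t\langle F'(\zeta_t),\nabla^k u-\nabla^k g\rangle$ and minimality one obtains $\int_\Omega\langle F'(\zeta_t),\nabla^k u-\nabla^k g\rangle\d x\leq0$, while $\langle F'(\zeta_t),\nabla^k g-\zeta_t\rangle\leq F(\nabla^k g)-F(\zeta_t)\leq F(\nabla^k g)$ together with $\nabla^k g-\zeta_t=(1-t)(\nabla^k g-\nabla^k u)$ gives the uniform bound $\langle F'(\zeta_t),\nabla^k g-\nabla^k u\rangle\leq(1-t)^{-1}F(\nabla^k g)$; a Fatou argument then yields $\langle F'(\nabla^k u),\nabla^k u-\nabla^k g\rangle\in\LL^1(\Omega)$ with $\int_\Omega\langle F'(\nabla^k u),\nabla^k u-\nabla^k g\rangle\d x\leq0$. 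Finally I would bootstrap: by the rescaled Fenchel--Young inequality $\langle F'(\nabla^k u),\nabla^k g\rangle=\tfrac1s\langle F'(\nabla^k u),s\nabla^k g\rangle\leq\tfrac1s F(s\nabla^k g)+\tfrac1s F^\ast(F'(\nabla^k u))$, inserting the Fenchel identity $F^\ast(F'(\nabla^k u))=\langle F'(\nabla^k u),\nabla^k u\rangle-F(\nabla^k u)$ and the splitting $\langle F'(\nabla^k u),\nabla^k u\rangle=\langle F'(\nabla^k u),\nabla^k u-\nabla^k g\rangle+\langle F'(\nabla^k u),\nabla^k g\rangle$, and absorbing the term $\tfrac1s\langle F'(\nabla^k u),\nabla^k g\rangle$ (legitimate since $1-\tfrac1s>0$), one bounds $\langle F'(\nabla^k u),\nabla^k g\rangle$ above by an $\LL^1$ function; combined with the elementary bound $\langle F'(\nabla^k u),\nabla^k u\rangle=F^\ast(F'(\nabla^k u))+F(\nabla^k u)\geq-F(0)$ this gives $\langle F'(\nabla^k u),\nabla^k g\rangle\in\LL^1(\Omega)$, hence $\langle F'(\nabla^k u),\nabla^k u\rangle\in\LL^1(\Omega)$ and $F^\ast(F'(\nabla^k u))\in\LL^1(\Omega)$.

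\textbf{Step C} is the Euler--Lagrange inequality. For $v\in K$ with $v-u\in\WW^{k,\infty}_0(\Omega)$ the inner variations $u+t(v-u)\in\WW^{k,1}_g(\Omega)\cap K$ and minimality give $\int_\Omega t^{-1}(F(\nabla^k u+t\nabla^k(v-u))-F(\nabla^k u))\d x\geq0$; by convexity of $F$ along the segment and since $\nabla^k u\in\operatorname{int}(\operatorname{dom}F)$ a.e., the integrand decreases as $t\downarrow0$ to $\langle F'(\nabla^k u),\nabla^k(v-u)\rangle\in\LL^1(\Omega)$ (using $F'(\nabla^k u)\in\LL^1(\Omega)$ and $\nabla^k(v-u)\in\LL^\infty(\Omega)$), and passing to the limit --- by monotone convergence, with $F(\nabla^k v)-F(\nabla^k u)$ an $\LL^1$-majorant when $\F(v)<\infty$ and the general case reducing to this --- yields $\int_\Omega\langle F'(\nabla^k u),\nabla^k(v-u)\rangle\d x\geq0$, as asserted. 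I expect the main obstacle to be Steps A and B, namely forcing $\nabla^k u$ into $\operatorname{int}(\operatorname{dom}F)$ and obtaining the $\LL^1$-bounds on $F^\ast(F'(\nabla^k u))$ and $\langle F'(\nabla^k u),\nabla^k u\rangle$ without any growth hypothesis on $F$: this is exactly where the convexity of $K$ (which renders the inner variations towards $g$ admissible), the essential smoothness of $F$ (its steepness at $\partial(\operatorname{dom}F)$), and the quantitative slack $F(s\nabla^k g)\in\LL^1(\Omega)$, $s>1$, all come into play.
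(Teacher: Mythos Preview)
Your approach is correct and genuinely different from the paper's. The paper proceeds by approximating $F$ from below by smooth, globally Lipschitz integrands $F_j$ (Lemma~\ref{lem:approximations}), shows via Young-measure arguments that the corresponding infima $f_j$ increase to $\F(u)$, then applies Ekeland's variational principle to obtain approximate minimisers $u_j$ close to $u$ in $\WW^{k,1}$ that satisfy an approximate variational inequality; the passage to the limit uses the duality relation for $\sigma_j=F_j'(\nabla^k u_j)$ together with the test $v=g$ and the slack $s>1$ to force equi-integrability of $(\sigma_j)$, whence $\sigma_j\to\sigma=F'(\nabla^k u)$ in $\LL^1$ by Vitali. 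By contrast, you work directly on the minimiser $u$ via the inner variations $u_t=(1-t)u+tg$, using convexity of $K$ and $g\in K$ to keep $u_t$ admissible, then exploit essential smoothness to rule out $\nabla^k u\in\partial(\tp{dom}\,F)$ on a set of positive measure, and finally bootstrap the integrability through Fenchel--Young and Fatou arguments; the same slack $s>1$ enters at the same point in both proofs, namely when absorbing the $\tfrac1s F^\ast(\sigma)$ term. Your route is more elementary---no approximation scheme, no Ekeland, no Young measures---and makes the role of the hypotheses (convexity of $K$, $g\in K$, essential smoothness) more transparent. The paper's approximation machinery, on the other hand, is what allows the authors to run an almost identical argument in the linear-growth $\BV$ setting of Section~\ref{sec:BV}, where your direct inner-variation argument would not apply as written. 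One small point: in Step~A you assert $\langle F'(\zeta_t),\nabla^k g-\nabla^k u\rangle\to-\infty$; this does not follow from $\snr{F'(\zeta_t)}\to\infty$ alone, but is correct because otherwise $h'(t)=\langle F'(\zeta_t),\nabla^k g-\nabla^k u\rangle$ would be bounded on $(0,1]$, and testing the subgradient inequality at $\zeta_t$ against points in a fixed ball $B_\rho(\nabla^k g(x))\subset\tp{int}(\tp{dom}\,F)$ then bounds $\snr{F'(\zeta_t)}$ uniformly, contradicting essential smoothness.
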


Our proof follows essentially along the same lines as \cite{Carozza2015}. We approximate $F(\cdot)$ from below by a sequence of integrands $F_k(\cdot)$.
The idea is that minimisers $u_k$ of the regularised functionals should have the desired properties, converge to the minimiser $u$ of $\F(\cdot,\Omega)$
and that these properties are retained in the limit. Our approximations are smooth, convex and globally Lipschitz-continuous.
The corresponding variational problems are degenerately convex problems of linear-growth. It is known that in general such problems need to be solved in $BV(\Omega)$.
For integrands satisfying superlinear growth, we avoid the use of $BV(\Omega)$ by utilising Ekeland's variational principle.
Convergence of $u_k$ to $u$ in the $\LL^1$-sense is established using the theory of Young measures.

The key tool driving our arguments is convex duality theory and it is our more careful use of this theory that allows us to extend to essentially smooth integrands.
Duality ideas in the context of the calculus of variations already appear in \cite{Zhikov1994} and were used in the context of integrands with linear growth in \cite{Seregin1993}.
In recent developments, the idea has been underused in the opinion of the authors, but has nevertheless been applied in the context of standard growth \cite{Carozza2005,Carozza2006,Carozza2013}
and faster than exponential growth \cite{Bonfanti2012,Bonfanti2013}.

We remark that there is an extensive literature on functionals with nonstandard growth and refer to the surveys \cite{Mingione2006,Mingione2021} for a general exposition
and more references. Further information, regarding in particular functionals with linear and nearly-linear growth can be found in \cite{Fuchs2000,Bildhauer2003}. We would like to point out that the energy-extremality of minimisers plays a key role in the approach to these problems. Finally, we remark that in
the case of non-convexity of the integrand the situation is considerably more complicated and the results much weaker, see e.g. \cite{Kristensen2006,Kristensen2010,Gmeineder2019}.

In the case of linear growth \eqref{eq:demiCoerc}, while still proceeding along the same lines of argument, the proof requires more care than in the super-linear case.
The key observation is a representation formula that seems to have gone unnoticed in the literature so far.
\begin{theorem}\label{thm:repr}
Suppose $\Omega$ is a Lipschitz domain. Assume that $F\colon\M\to \R$ is $C^1$, convex and satisfies \eqref{eq:demiCoerc}. Then for $u\in\BV^k(\Omega)$ it holds that
\begin{align*}
\F(u)=\int_{\Omega'}F(\nabla^k u)\d x+\int_{\Omega'} F(\D_s^k u)
\end{align*}
where we decompose the gradient of $u$ into its absolutely continuous and singular parts $\D u=\nabla u+ \D_s u$.
\end{theorem}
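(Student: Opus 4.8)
The plan is to establish the two inequalities in the stated identity separately, working in the unconstrained case $K=\WW^{k,1}(\Omega')$ and identifying $u\in\BV^k(\Omega)$ with its extension by $g$ across $\partial\Omega$. The first step is to check that this identification is harmless. If the extension does not lie in $\BV^k(\Omega')$, then along any $(u_j)\in X$ with $\liminf_j\int_{\Omega'}F(\nabla^k u_j)\,\d x<\infty$ one gets, after subtracting from $F$ its linear part $a$ from \eqref{eq:demiCoerc} (which integrates to a boundary term on $\partial\Omega'$ that stays bounded because the jets of $u_j$ on $\partial\Omega'$ converge to those of $g$), a uniform bound $\sup_j\|\nabla^k u_j\|_{\LL^1(\Omega')}<\infty$; iterating, from the top order downwards, the fact that $\LL^1$-convergence of $\nabla^{i}u_j$ forces the distributional gradient of $\nabla^{i-1}(\text{extension})$ to be absolutely continuous, one finds that the traces of all lower-order jets of $u$ must coincide with those of $g$ on $\partial\Omega$ --- which is exactly what puts the extension in $\BV^k(\Omega')$. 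Hence if it does not lie there, both sides are $+\infty$ and there is nothing to prove. So from now on $u\in\BV^k(\Omega')$, $\D^k u=\nabla^k u\,\mathcal L^n+\D_s^k u$ on $\Omega'$, and $\int_{\Omega'}F(\D_s^k u)$ denotes, as usual, $\int_{\Omega'}F^\infty\big(\tfrac{\d\D_s^k u}{\d|\D_s^k u|}\big)\,\d|\D_s^k u|$ with $F^\infty$ the recession function of $F$.

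\textbf{Lower bound.} I would test against the dual variable. Fix $\varphi\in C_c^\infty(\Omega';\M^\ast)$ taking values in the interior of $\operatorname{dom}F^\ast$, so that $F^\ast\circ\varphi$ is bounded, and let $(u_j)\in X$ be arbitrary. The Fenchel--Young inequality and $k$-fold integration by parts give
\[
\int_{\Omega'}F(\nabla^k u_j)\,\d x\ \ge\ (-1)^k\!\int_{\Omega'}\!\big\langle u_j,\operatorname{div}^k\varphi\big\rangle\,\d x-\int_{\Omega'}F^\ast(\varphi)\,\d x ,
\]
and as $j\to\infty$ --- using that $u_j\to u$ in $\LL^1(\Omega')$, which follows from $u_j\wstar u$ in $\BV(\Omega)$ together with $u_j\to g$ in $\WW^{k,1}(\Omega'\setminus\Omega)$ --- the right-hand side tends to $\int_{\Omega'}\langle\varphi,\d\D^k u\rangle-\int_{\Omega'}F^\ast(\varphi)\,\d x$. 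Passing to $\liminf_j$, then to the infimum over $(u_j)\in X$, and finally to the supremum over admissible $\varphi$ yields $\F(u)\ge\int_{\Omega'}F(\d\D^k u)$ in the sense of the integral of a convex function with respect to the measure $\D^k u$; the classical identification of that integral with its Lebesgue decomposition, together with a routine density argument passing from general continuous to smooth compactly supported $\varphi$, rewrites the right-hand side as $\int_{\Omega'}F(\nabla^k u)\,\d x+\int_{\Omega'}F(\D_s^k u)$.

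\textbf{Upper bound.} I would build a recovery sequence by mollification plus a boundary collar. Fix $\delta>0$ small, let $\Omega_\delta\Subset\Omega$ be the inner parallel set, put $u_j=u\ast\rho_\e$ on $\Omega_\delta$ and $u_j=g\ast\rho_\e$ on $\Omega'\setminus\Omega$ for $\e\ll\delta$, and glue the two pieces across the Lipschitz collar $\Omega\setminus\Omega_\delta$ by a cut-off carrying the $(k-1)$-jet of the inner piece to that of $g$; the lower jets already agree by the reduction above, so only the top jet has to be bent, which adds only a bounded amount to $\|\nabla^k u_j\|_{\LL^1}$, and hence $u_j\in X$. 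In the interior, $\int_{\Omega_\delta}F\big((\D^k u)\ast\rho_\e\big)\,\d x\to\int_{\Omega_\delta}F(\nabla^k u)\,\d x+\int_{\Omega_\delta}F(\D_s^k u)$ as $\e\to0$, by Jensen's inequality for ``$\le$'' and the lower bound just proved (or Reshetnyak-type continuity) for ``$\ge$''. In the collar the dominant part of $\nabla^k u_j$ has magnitude $\asymp\delta^{-1}$ and equals $\delta^{-1}$ times the symmetric tensor built from the outer normal and the jump of $\nabla^{k-1}u$ relative to $g$; since $tF(t^{-1}\zeta)\to F^\infty(\zeta)$ and $\delta^{-1}$ times the collar volume converges to $\mathcal H^{n-1}\mres\partial\Omega$, the collar energy converges, as $\delta\to0$, to the $F^\infty$-integral of that tensor, i.e. to the mass of $\D_s^k u$ carried on $\partial\Omega$. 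A diagonal choice of $(\e,\delta)$ then produces $(u_j)\in X$ along which $\int_{\Omega'}F(\nabla^k u_j)\,\d x\to\int_{\Omega'}F(\nabla^k u)\,\d x+\int_{\Omega'}F(\D_s^k u)$, so $\F(u)$ is no larger than the right-hand side.

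\textbf{Main obstacle.} The hard part is the collar construction together with the identification of its limiting energy with the mass of $\D_s^k u$ on $\partial\Omega$: one must interpolate the $(k-1)$-jet through a vanishing layer while keeping exact control of the $\delta^{-1}$ blow-up of $\nabla^k u_j$, and then recognise the limiting boundary integrand as the recession function evaluated on the singular part that $\D^k u$ carries on $\partial\Omega$ once $u$ is extended by $g$ --- which is precisely where the Lipschitz regularity of $\Omega$ and the trace theory for $\BV^k$ enter. The absence of an upper growth bound on $F$ is harmless: wherever $F$ is super-linear the recession function is $+\infty$, both sides of the identity are $+\infty$ there, and genuine content is confined to the linear-growth regime, in which all the quantities above are finite and the estimates are sharp.
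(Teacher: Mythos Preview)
Your proposal is correct in outline but both halves proceed quite differently from the paper.

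For the lower bound you argue by convex duality: pair $\nabla^k u_j$ against smooth $\varphi$ valued in $\operatorname{int}\operatorname{dom}F^\ast$, use Fenchel--Young, integrate by parts, pass to the limit in $j$, and take the supremum over $\varphi$ to recover the Goffman--Serrin/Demengel--Temam formula for $\int F(\d D^k u)$. The paper instead extracts a generalised Young measure $\nu$ from $(\nabla^k u_j)$, tests against the Lipschitz approximants $F_s$ of Lemma~\ref{lem:approximations}, sends $s\to\infty$ by monotone convergence, and then applies Jensen's inequality to the barycentres of $\nu$. Your route is more elementary; the paper's Young-measure argument, however, is what yields the structural identities \eqref{eq:YoungConseq} recorded in the Corollary immediately afterwards, which a pure duality proof does not see.

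For the upper bound you mollify in an inner set $\Omega_\delta$, keep $g$ outside $\Omega$, glue across a collar by a cut-off, and argue that the collar energy converges to the $F^\infty$-mass of the jump of $\nabla^{k-1}u$ on $\partial\Omega$. The paper sidesteps the collar analysis entirely: it first composes the extended $u$ with the contracting diffeomorphism $\Psi_t$ of Lemma~\ref{lem:diffeos}, so that $u^t=u\circ\Psi_t$ coincides with $g\circ\Psi_t$ on a full neighbourhood of $\partial\Omega$ and all singular mass of $D^k u^t$ sits compactly inside $\Omega$; then a single application of Jensen's inequality for convolution against the measure $D^k u^t$ gives
\[
\int_{\Omega'}F(\nabla^k u_{\e,t})\,\d x\ \le\ \int_{\Omega'}\Phi_\e\star\bigl(F(\nabla^k u^t)\,\d x+F^\infty(D_s^k u^t)\bigr),
\]
after which one lets $\e\to0$ and then $t\nearrow1$ via the Jacobian of $\Psi_t$. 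This is cleaner precisely because it absorbs the boundary singular part into the interior where mollification handles it automatically. Your collar construction can be made rigorous, but the assertion that the collar energy converges to the $F^\infty$-integral of the jump tensor requires a genuine trace-layer interpolation with the correct leading-order structure, not only the scaling heuristic you give; this is exactly the technicality that the diffeomorphism trick is designed to avoid.
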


Using this representation we establish the following result:
\begin{theorem}
Let $\Omega$ be a Lipschitz domain. Suppose that $F$ is $C^1$, strictly convex and satisfies \eqref{eq:demiCoerc}. Let $u\in \BV^k(\Omega)$ be a minimiser of $\F(\cdot)$ in the unconstrained setting where $K=W^{k,1}(\Omega)$. Then $F'(\nabla^k u)$ is divergence-free in the sense of distributions. Moreover $\langle F'(\nabla^k u),\nabla^k u\rangle\in L^1(\Omega)$
\end{theorem}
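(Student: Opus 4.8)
The plan is to deduce the Euler--Lagrange equation directly from the representation formula of Theorem~\ref{thm:repr} by a two-sided inner variation: once the singular energy is split off, the absolutely continuous part satisfies an ordinary convex minimisation problem, to which the classical first-variation argument applies. Assume first that $F$ has at most linear growth — the genuinely superlinear case reduces to Theorem~\ref{thm:mainConstrIntro}, since then $F^\infty\equiv+\infty$ off the origin and $\F(u)<\infty$ forces $\D_s^k u=0$, i.e. $u\in\WW^{k,1}(\Omega)$. In the linear growth regime $F$ is globally Lipschitz, say with $|F'|\le L$.

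Since $u$ is a minimiser and the infimum of $\F$ is finite, $\F(u)<\infty$, so Theorem~\ref{thm:repr} gives
\[
\F(u)=\int_{\Omega'}F(\nabla^k u)\,\d x+\int_{\Omega'}F(\D_s^k u),
\]
with both terms finite; in particular $F(\nabla^k u)\in\LL^1(\Omega)$ and $S:=\int_{\Omega'}F(\D_s^k u)$ is a finite constant. Fix $\phi\in C_c^\infty(\Omega,\R^N)$, extended by $0$ to $\Omega'$, and $t\in\R$. Since $\D^k\phi=\nabla^k\phi\,\d x$ is absolutely continuous we have $\nabla^k(u+t\phi)=\nabla^k u+t\nabla^k\phi$ and $\D_s^k(u+t\phi)=\D_s^k u$, while all contributions away from $\Omega$, and the $\partial\Omega$-traces, are unchanged; hence Theorem~\ref{thm:repr} applied to $u+t\phi$ produces the same constant $S$, and minimality $\F(u)\le\F(u+t\phi)$ collapses to
\[
\int_\Omega F(\nabla^k u)\,\d x\le\int_\Omega F\bigl(\nabla^k u+t\nabla^k\phi\bigr)\,\d x\qquad\text{for all }t\in\R .
\]

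Thus $t=0$ minimises the convex function $h(t):=\int_\Omega F(\nabla^k u+t\nabla^k\phi)\,\d x$. By convexity the difference quotients $\tfrac1t\bigl(F(\nabla^k u+t\nabla^k\phi)-F(\nabla^k u)\bigr)$ decrease, as $t\downarrow0$, to $\langle F'(\nabla^k u),\nabla^k\phi\rangle$ (using $F\in C^1$), and for $0<t\le1$ they lie between the functions $-(F(\nabla^k u-\nabla^k\phi)-F(\nabla^k u))$ and $F(\nabla^k u+\nabla^k\phi)-F(\nabla^k u)$, which are in $\LL^1(\Omega)$ since $F$ is Lipschitz; monotone convergence then gives $\langle F'(\nabla^k u),\nabla^k\phi\rangle\in\LL^1(\Omega)$ and $h'(0^+)=\int_\Omega\langle F'(\nabla^k u),\nabla^k\phi\rangle\,\d x\ge0$. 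Running the same argument with $-\phi$ reverses the inequality, whence
\[
\int_\Omega\langle F'(\nabla^k u),\nabla^k\phi\rangle\,\d x=0\qquad\text{for every }\phi\in C_c^\infty(\Omega,\R^N),
\]
which is the asserted divergence-freeness. Finally $|\langle F'(\nabla^k u),\nabla^k u\rangle|\le L\,|\nabla^k u|\in\LL^1(\Omega)$ because $\nabla^k u\in\LL^1(\Omega)$ for $u\in\BV^k(\Omega)$, and the Fenchel--Young equality $\langle F'(\xi),\xi\rangle=F(\xi)+F^\ast(F'(\xi))$ (valid as $F\in C^1$) then also yields $F^\ast(F'(\nabla^k u))\in\LL^1(\Omega)$.

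The real work is carried by Theorem~\ref{thm:repr}; granted it, the only delicate point is the passage to the limit in the difference quotients, which is routine once $F$ is Lipschitz with controlled growth. Strict convexity is not needed in the chain above for a fixed minimiser, but it makes $F'(\nabla^k u)$ intrinsic to the problem: any two minimisers share the same absolutely continuous $k$-th gradient, since their average is again a minimiser and strict convexity pins down $\nabla^k u$ a.e.\ (although the singular parts may differ). One could alternatively run the whole argument through the Lipschitz approximations $F_j\nearrow F$ used elsewhere in the paper, in which case strict convexity enters in the familiar way: Jensen's inequality combined with Theorem~\ref{thm:repr} forces the Young measure generated by $\nabla^k u_j$ to equal $\delta_{\nabla^k u}$ a.e., which is precisely what permits passing $F_j'(\nabla^k u_j)\to F'(\nabla^k u)$ to the limit.
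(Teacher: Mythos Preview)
Your route---invoke Theorem~\ref{thm:repr} to split off the singular energy as a constant and then take a two-sided first variation in $C_c^\infty$ directions---is genuinely different from the paper's, which instead runs through the Lipschitz approximations $F_j$ of Lemma~\ref{lem:approximations}, applies Ekeland's principle to manufacture almost-extremals $w_j$, and uses the duality bound $F_j^\ast(\xi)\ge r|\xi|$ tested against $s\nabla^k g$ to obtain equi-integrability of $\sigma_j=F_j'(\nabla^k w_j)$ before passing to the limit (with strict convexity entering only at the very end to identify $\overline\nu_x=\nabla^k u$). When $F$ has at most linear growth your argument is correct and considerably shorter: global Lipschitz continuity trivialises every integrability issue, and the representation formula does all the structural work.

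The gap is that the dichotomy ``at most linear growth, else (H1) holds'' is not exhaustive under the stated hypotheses. Only the lower bound \eqref{eq:demiCoerc} is assumed, so $F$ may grow linearly in some directions and superlinearly in others---for instance $F(\xi_1,\xi_2)=\sqrt{1+\xi_1^2}+\xi_2^2$ is $C^1$, strictly convex and satisfies \eqref{eq:demiCoerc}. For such integrands $F^\infty$ is finite on a nontrivial cone, so a minimiser can genuinely lie in $\BV^k\setminus\WW^{k,1}$ with $D_s^k u$ concentrated there, yet \eqref{eq:superlinearity} fails and Theorem~\ref{thm:mainConstrIntro} does not apply. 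In this intermediate regime $F$ is not globally Lipschitz, and with $\nabla^k u$ only in $L^1$ you have no a priori reason for $F(\nabla^k u\pm\nabla^k\phi)-F(\nabla^k u)$, or even $F'(\nabla^k u)$ itself, to be integrable; your monotone-convergence step then lacks the dominating function it needs, and the final claim $\langle F'(\nabla^k u),\nabla^k u\rangle\in L^1$ no longer follows from a pointwise bound. Supplying that missing integrability is exactly what the paper's duality estimate against $s\nabla^k g$ accomplishes. Your closing remark about the $F_j$-route is accurate, but executing it requires the Ekeland and Young-measure machinery of Section~\ref{sec:BV} rather than the short direct computation you present.
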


The paper is structured as follows. In Section \ref{sec:prelim} we define our notation and recall some facts from convex duality theory. We establish the representation formula of Theorem \ref{thm:repr} in Section \ref{sec:approx}. The constrained superlinear case is treated in Section \ref{sec:constr}, while the linear case can be found in Section \ref{sec:BV}.

\section{Preliminaries}\label{sec:prelim}
Throughout this paper we denote by $c$ a general constant that may vary from line to line. We denote the standard norm on $\R^n$ by $|\cdot|$ and we utilise the same norm on $\M\coloneqq\R^{N\times n\times k}$ so that for matrices $\xi,\eta\in \M$ we write $\langle\xi,\eta\rangle = \tp{trace}(\xi^T \eta)$ for the usual inner product and $|\xi|=\langle \xi,\xi\rangle^\frac 1 2$ for the corresponding inner product. We denote for $a\in \R^n$, $b\in \M$ by $a\otimes b\in \M$ the usual tensor product and note that $(a\otimes b)x=(b\cdot x)a$ for $x\in \R^n$ and that $|a\otimes b|=|a||b|$.

If $F\colon \M\to \R$ is continuously differentiable at $\xi\in \M$ we write for $\eta\in \M$
$$
F'(\xi)\eta=\frac{\d}{\d t}\restr{}{t=0}F(\xi+t\eta)=\langle F'(\xi),\eta\rangle.
$$
Note that here we view $F'(\xi)$ both as an $N\times n\times k$ matrix and as the corresponding linear form on $\M$.

For $1\leq p\leq\infty$, we denote by $\WW^{k,p}(\Omega)=\WW^{k,p}(\Omega,\R^N)$ the usual Sobolev space. $\BV^k(\Omega)=\BV^k(\Omega,\R^N)$ denotes the space of functions with derivatives up to order $k$ that are functions of bounded variation. 

Given $g\in \WW^{1,1}(\Omega)$ we write $u\in \WW^{1,1}_g(\Omega)$ if $u-g\in \WW^{1,1}_0(\Omega)$, where the latter is defined as the closure of the space of smooth compactly supported test maps $C^\infty_c(\Omega,\R^N)$ in $\WW^{1,1}(\Omega)$. We remark that due to Mazur's lemma, weakly closed sets in $W^{1,1}(\Omega)$ are strongly closed and hence we refer to them simply as closed.

If $u\in \BV^k(\Omega)$, we write $\D^k u = \nabla^k u + \D_s^k u$ where $\nabla^k u$ is the absolutely continuous part with respect to Lebesgue measure of $\D^k u$ and $\D_s u$ denotes the singular part.

Our reference regarding convex analysis is \cite{Rockafellar1970}, but for the readers convenience we recall some of the key fact we use here. Throughout this discussion $f\colon \M\to \R\cup \{\infty\}$ will be a convex function. We denote the \textit{domain} of $f$ by
\begin{align*}
\tp{dom}(f) = \{z\in\M\colon f(z)<\infty\}.
\end{align*}
Note that $\tp{dom}(f)$ is an open convex set.

$f$ is called \textit{proper} if $\tp{dom}(f)$ is non-empty and $f$ is finite on it.

We say that $f$ is \textit{essentially smooth} if the following assumptions are satisfied
\begin{enumerate}
\item $C=\tp{dom}(f)\neq \emptyset$
\item $f$ is continuously differentiable in $C$
\item $\lim_{i\to\infty} |\nabla f(x_i)|=+\infty$ whenever $(x_i)\subset C$ converges to a boundary point $x$ of $C$.
\end{enumerate}

Note that a convex function $f$, finite on an open convex set $C$, that is differentiable on $C$, is continuously differentiable on $C$. In particular, if $f$ is essentially smooth, then $f$ is continuously differentiable on $\tp{dom}(f)$.

We say a proper convex function $f$ is \textit{essentially strictly convex} if $f$ is strictly convex on every convex subset of $\tp{dom }\p f$. Here $\p f$ is the subdifferential of $f$.

We have that
\begin{theorem}
A lower semi-continuous proper convex function $f$ is essentially strictly convex if and only if its conjugate $f^\ast$ is essentially smooth.
\end{theorem}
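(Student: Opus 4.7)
The plan is to translate both notions into purely subdifferential-theoretic conditions and then invoke the duality $\partial f^{\ast}=(\partial f)^{-1}$, which is available because $f$ is lsc, proper, and convex, so that $f^{\ast\ast}=f$ and $y\in\partial f(x)\iff x\in\partial f^{\ast}(y)$.

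The first step is to characterise essential smoothness of a proper lsc convex function $g$ by: $g$ is essentially smooth if and only if $\partial g$ is single-valued on its effective domain $\tp{dom}(\partial g)$ \emph{and} $\tp{dom}(\partial g)=\tp{int}\,\tp{dom}(g)$. On the interior of the domain, differentiability at a point is equivalent to $\partial g$ being a singleton there, and outer semicontinuity of $\partial g$ then upgrades pointwise differentiability to continuous differentiability. The boundary blow-up condition is equivalent to $\tp{dom}(\partial g)\subseteq \tp{int}\,\tp{dom}(g)$: if a boundary point $x$ admitted some $y\in\partial g(x)$, approaching $x$ by interior points along which $\nabla g$ stays bounded would, by passing to a subsequence, produce a limit gradient contradicting the blow-up; conversely, if $\nabla g(x_i)$ had a bounded subsequence along $x_i\to x\in\partial\,\tp{dom}(g)$, outer semicontinuity of $\partial g$ would force a cluster point into $\partial g(x)$, contradicting $x\notin\tp{dom}(\partial g)$.

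The second step is to identify essential strict convexity of $f$ with single-valuedness of $(\partial f)^{-1}$ on its domain. Strict convexity of $f$ on a convex set $C\subseteq\tp{dom}(\partial f)$ is equivalent to the statement that $\partial f(x_1)\cap\partial f(x_2)=\emptyset$ for distinct $x_1,x_2\in C$; running this through all maximal convex subsets of $\tp{dom}(\partial f)$ (equivalently, over all $y$ in the range of $\partial f$, using that $(\partial f)^{-1}(y)$ is always a convex subset of $\tp{dom}(\partial f)$) gives that essential strict convexity is exactly the condition that $(\partial f)^{-1}(y)$ contains at most one point for every $y$. The final step combines the first two via $\partial f^{\ast}=(\partial f)^{-1}$: essential smoothness of $f^{\ast}$ is equivalent to single-valuedness of $\partial f^{\ast}$ on $\tp{dom}(\partial f^{\ast})$, which is single-valuedness of $(\partial f)^{-1}$, which is essential strict convexity of $f$.

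The main obstacle is the first step, in particular ensuring that the boundary blow-up condition is faithfully captured by the inclusion $\tp{dom}(\partial g)\subseteq\tp{int}\,\tp{dom}(g)$. Handling this requires careful use of outer semicontinuity of the subdifferential together with local boundedness of convex functions on the interior of the domain. Once this reformulation is secured, the duality collapses the equivalence to a formal manipulation of the inversion relation, so no further analytic input is needed beyond the standard convex-analytic facts already recalled in Section~\ref{sec:prelim}.
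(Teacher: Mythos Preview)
The paper does not actually prove this theorem. It appears in Section~\ref{sec:prelim} as a background fact recalled from convex analysis, with the explicit preface that ``Our reference regarding convex analysis is \cite{Rockafellar1970}''; no argument is supplied. So there is no proof in the paper to compare your proposal against.

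That said, your outline is the standard route (essentially Rockafellar's Theorem~26.3 via Theorem~26.1), and it is correct in spirit. One point deserves tightening. In your first step you characterise essential smoothness of $g$ by \emph{two} conditions, namely single-valuedness of $\partial g$ on $\tp{dom}(\partial g)$ \emph{and} $\tp{dom}(\partial g)=\tp{int}\,\tp{dom}(g)$, whereas in your third step you close the argument by invoking only the first condition for $g=f^{\ast}$. As written this is a mismatch: you have not shown that single-valuedness of $(\partial f)^{-1}=\partial f^{\ast}$ on its domain forces $\tp{dom}(\partial f^{\ast})=\tp{int}\,\tp{dom}(f^{\ast})$. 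The gap is easy to fill: if $x\in\tp{dom}(\partial g)\setminus\tp{int}\,\tp{dom}(g)$, then the normal cone $N_{\tp{dom}\,g}(x)$ contains a nonzero vector $v$, and for any $y\in\partial g(x)$ one has $y+tv\in\partial g(x)$ for all $t\geq 0$, contradicting single-valuedness. Hence the second condition is automatic once the first holds, and your final chain of equivalences goes through. You should, however, make this explicit so that the two ends of the argument visibly match.
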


Moreover we make the following observation.
\begin{lemma}\label{lem:linear}
Suppose $f\colon \M\to \R\cup\{\infty\}$ is convex. Then $\tp{dom}(f)$ has interior points if and only if $f^\ast$ is demi-coercive, that is there exists a linear function $L$ and constants $c_1>0$ and $c_2\in \R$ such that 
$$
f^\ast(\cdot)+L(\cdot)\geq c_1 |\cdot|+c_2.
$$
In fact, $B_r(x_0)\subset \tp{dom}(f)$ if and only if for some $c\in \R$,
\begin{align}\label{eq:demiCoercivity}
f^\ast(\xi)-\langle x_0, \xi\rangle \geq r |\xi|+c.
\end{align}
for all $\xi\in \M$.
\end{lemma}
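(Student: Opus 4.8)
\emph{Proof sketch.} Everything rests on one elementary identity: the support function of the closed ball $\overline{B_r(x_0)}$ is $\xi\mapsto\langle x_0,\xi\rangle+r|\xi|$. Read this way, \eqref{eq:demiCoercivity} says precisely that $f^\ast$ dominates the support function of \emph{some} ball of positive radius up to an additive constant, and the first assertion of the lemma is just the case in which the centre $x_0$, the radius $r=c_1>0$ and the constant $c=c_2$ are left unspecified (write the linear form of the demi-coercivity condition as $L(\cdot)=-\langle x_0,\cdot\rangle$). So the plan is to prove the two implications of the refined statement; the first assertion then follows by quantifying out $x_0$, $r$ and $c$.

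For the implication ``$\overline{B_r(x_0)}\subset\tp{dom}(f)\ \Rightarrow$ \eqref{eq:demiCoercivity}'' I would use that a convex function which is finite on an open convex set is continuous there, hence bounded on compact subsets, so that $f\le M$ on $\overline{B_r(x_0)}$ for some $M$. Restricting the supremum defining $f^\ast$ to that ball and invoking the support-function identity gives
\begin{align*}
f^\ast(\xi)=\sup_{z\in\M}\big(\langle z,\xi\rangle-f(z)\big)\ge\sup_{z\in\overline{B_r(x_0)}}\langle z,\xi\rangle-M=\langle x_0,\xi\rangle+r|\xi|-M,
\end{align*}
which is \eqref{eq:demiCoercivity} with $c=-M$.

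For the converse I would pass to the biconjugate. Since $f$ is convex one always has $f^{\ast\ast}\le f$, and the bound $f^\ast(\xi)\ge\langle x_0,\xi\rangle+r|\xi|+c$ yields, for every $x\in\M$,
\begin{align*}
f^{\ast\ast}(x)=\sup_{\xi\in\M}\big(\langle x,\xi\rangle-f^\ast(\xi)\big)\le\sup_{\xi\in\M}\big(\langle x-x_0,\xi\rangle-r|\xi|\big)-c,
\end{align*}
and the last supremum equals $0$ when $|x-x_0|\le r$ and $+\infty$ otherwise. Hence $f^{\ast\ast}<\infty$ on $\overline{B_r(x_0)}$, and since this closed ball is a neighbourhood of each point of the open ball, $B_r(x_0)\subset\tp{int}\,\tp{dom}(f^{\ast\ast})$. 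It remains to push this back to $f$: by the standard inclusions $\tp{dom}(f)\subseteq\tp{dom}(f^{\ast\ast})\subseteq\overline{\tp{dom}(f)}$ (cf. \cite{Rockafellar1970}) the sets $\tp{dom}(f)$ and $\tp{dom}(f^{\ast\ast})$ have the same interior, so $B_r(x_0)\subset\tp{int}\,\tp{dom}(f)\subseteq\tp{dom}(f)$; in particular $\tp{dom}(f)$ has interior points.

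The step I expect to be the crux is exactly this last transfer from $f^{\ast\ast}$ back to $f$: the hypothesis controls only $f^\ast$, and biconjugating destroys whatever $f$ does on the boundary of its domain, so one cannot recover $f$ itself on the ball — only $f^{\ast\ast}$ — and must route the argument through the interiors of the two domains. The other point that needs care is that in the first implication one genuinely needs $f$ \emph{bounded} on a ball rather than merely finite, which is why one wants the closed ball inside the open set $\tp{dom}(f)$ and uses the continuity of convex functions on open convex sets. Beyond these two points, the proof is routine Legendre--Fenchel bookkeeping.
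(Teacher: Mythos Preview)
Your proposal is correct and follows essentially the same route as the paper. For the forward implication the paper plugs in the single point $z=x_{0}+r\xi/|\xi|$ rather than taking the supremum over the ball, but this is the same computation since that point realises the support function. For the converse the paper computes $f^{\ast\ast}(x_{0}+\tau\theta)$ for $0<\tau<r$ and $|\theta|=1$ exactly as you do, and then simply asserts $B_{r}(x_{0})\subset\tp{dom}(f)$; you are in fact more careful than the paper here, since you make explicit the passage from $\tp{dom}(f^{\ast\ast})$ back to $\tp{dom}(f)$ via the standard inclusion $\tp{dom}(f)\subseteq\tp{dom}(f^{\ast\ast})\subseteq\overline{\tp{dom}(f)}$, a step the paper leaves implicit. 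Your remark about needing the \emph{closed} ball inside $\tp{dom}(f)$ to secure boundedness is also well taken and is glossed over in the paper's version.
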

\begin{proof}
Note that $f$ is necessarily continuous in $\tp{dom}(f)$, so that the supremum in \eqref{eq:demiCoercivity} is well-defined and finite. Assume first that $B_r(x_0)\subset \tp{dom}(f)$. Then for $\xi\in \R^{N\times n}$,
\begin{align*}
f^\ast(\xi) =& \sup_{z\in \R^{N\times n}} \langle \xi,z\rangle - f(z)\\
\geq& \left\langle \xi,x_0+r \frac \xi {|\xi|}\right\rangle-f\left(x_0+r \frac \xi {|\xi|}\right)\\
\geq& \langle \xi,x_0\rangle +\rho |\xi|+\inf_{z\in B_r(x_0)} f(z).
\end{align*}
Conversely, if \eqref{eq:demiCoercivity} holds, we have for $0<\tau<r$ and $\theta\in R^{N\times n}$ with $|\theta|=1$,
\begin{align*}
f^{\ast\ast}(x_0+\tau\theta) =& \sup_{z\in \R^{N\times n}} \langle x_0+\tau\theta,z\rangle- f^\ast(z)\\
\leq& \sup_{z\in \R^{N\times n}} \langle x_0+\tau\theta,z\rangle - r|z|-\langle x_0,z\rangle -c\\
\leq & \langle |\tau||\theta||z|-r|z|+c\\
\leq& c.
\end{align*}
In particular, we deduce that $B_r(x_0)\subset \tp{dom}(f)$.
\end{proof}

Finally we record a technical lemma that allows us to pull in the boundary of a Lipschitz domain. The idea is to replicate in this setting the behaviour of the map $x\to s x$ on the unit ball.
\begin{lemma}\label{lem:diffeos}
Suppose $\Omega\Subset\Omega'$ is a Lipschitz domain. Then there is a family of Lipschitz-diffeomorphisms $\Psi_s\colon \Omega'\to\Omega'$ such that
\begin{enumerate}
\item $J\Psi_s\to 1$ and $|D\Psi_s-\tp{Id}|\to 0$ uniformly in $\Omega'$ as $s\nearrow 1$.
\item $\Psi_s(\Omega)=\Omega^s\Subset\Omega$, where $|\Omega\setminus\Omega^s|\sim d(\p\Omega,\p\Omega\setminus\Omega^s)$.
\item $\Psi_s=\tp{Id}$ in an open neighbourhood of $\p\Omega'$
\end{enumerate}
\end{lemma}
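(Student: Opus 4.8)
\emph{Proof strategy.} The plan is to take $\Psi_s:=\mathrm{Id}+(1-s)W$ for $s$ close to $1$, where $W$ is a fixed Lipschitz vector field, compactly supported in $\Omega'$, that points strictly into $\Omega$ along $\partial\Omega$; this is the infinitesimal form of the dilation $x\mapsto sx$, which on a ball pushes the bounding sphere inward. To construct $W$: since $\Omega$ is a bounded Lipschitz domain, cover $\partial\Omega$ by finitely many balls $B_1,\dots,B_m$ in each of which, after an orthogonal change of coordinates, $\Omega\cap B_j$ is the region above the graph of a Lipschitz function with Lipschitz constant $\le L$; let $v_j$ be the corresponding inward vertical unit vector, and let $\{\psi_j\}$ be a partition of unity subordinate to $\{B_j\}$ on a neighbourhood $U$ of $\partial\Omega$. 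Set $V_0:=\sum_j\psi_j v_j$, so that $|V_0|\le1$ and, since $\langle v_j,\nu(x)\rangle\le-(1+L^2)^{-1/2}=:-c_0$ for $\mathcal H^{n-1}$-a.e.\ $x\in\partial\Omega\cap B_j$ ($\nu$ the outer unit normal), also $\langle V_0(x),\nu(x)\rangle\le-c_0$ for $\mathcal H^{n-1}$-a.e.\ $x\in\partial\Omega$. Choosing the $B_j$ fine enough that overlapping charts have nearly aligned vertical axes, $V_0$ additionally realises a uniform interior-cone estimate: there are $c_1,t_0>0$ with $B_{c_1t}\bigl(x+tV_0(x)\bigr)\subset\Omega$ for all $x\in\partial\Omega$ and $0<t\le t_0$. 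Finally fix $\chi\in C_c^\infty(\Omega')$ with $\chi\equiv1$ on a neighbourhood of $\overline\Omega$ inside $U$ and put $W:=\chi V_0$: this is Lipschitz, is supported in a compact set $K\Subset\Omega'$, and retains the two displayed properties.

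Fix $1-s<\min\{t_0,\|DW\|_\infty^{-1},\operatorname{dist}(K,\partial\Omega')/\|W\|_\infty\}$, set $\Psi_s:=\mathrm{Id}+(1-s)W$, and extend $\Psi_s$ to $\R^n$ by the identity (legitimate since $W$ vanishes off $K$). The differential $D\Psi_s=\mathrm{Id}+(1-s)DW$ is everywhere invertible and $|\Psi_s(x)-\Psi_s(y)|\ge(1-(1-s)\|DW\|_\infty)\,|x-y|$, so $\Psi_s$ is an injective, continuous, proper self-map of $\R^n$ and hence, by invariance of domain, a bi-Lipschitz homeomorphism of $\R^n$ that fixes $\R^n\setminus\Omega'$ pointwise; thus it restricts to a bi-Lipschitz self-map of $\Omega'$ coinciding with the identity on $\Omega'\setminus K$, a one-sided neighbourhood of $\partial\Omega'$, which is (iii). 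Since $D\Psi_s-\mathrm{Id}=(1-s)DW\to0$ uniformly on $\Omega'$ as $s\nearrow1$, also $J\Psi_s=\det(\mathrm{Id}+(1-s)DW)\to1$ uniformly, which is (i).

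For (ii), the interior-cone estimate gives $\operatorname{dist}(\Psi_s(x),\partial\Omega)\ge c_1(1-s)$ for every $x\in\partial\Omega$; since $\Psi_s$ is a homeomorphism close to the identity, this forces $\Omega^s:=\Psi_s(\Omega)\Subset\Omega$ with $\partial\Omega^s=\Psi_s(\partial\Omega)$, so the collar width satisfies $c_1(1-s)\le d(\partial\Omega,\partial\Omega^s)\le\|W\|_\infty(1-s)$. The area formula and the Gauss--Green theorem on the Lipschitz domain $\Omega$ then give $|\Omega\setminus\Omega^s|=|\Omega|-\int_\Omega J\Psi_s\,\mathrm{d}x=-(1-s)\int_\Omega\operatorname{div}W\,\mathrm{d}x+O((1-s)^2)$, and since $\int_\Omega\operatorname{div}W\,\mathrm{d}x=\int_{\partial\Omega}\langle W,\nu\rangle\,\mathrm{d}\mathcal H^{n-1}$ lies between $-\mathcal H^{n-1}(\partial\Omega)$ and $-c_0\,\mathcal H^{n-1}(\partial\Omega)$, we obtain $|\Omega\setminus\Omega^s|\sim(1-s)\sim d(\partial\Omega,\partial\Omega^s)$, which is (ii).

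The only genuinely nontrivial point is the uniform interior-cone estimate for $V_0$: over a Lipschitz graph $\{x_n>\gamma(x')\}$ a short vertical segment lands a definite distance inside the supergraph, and one must take the covering fine enough that the partition-of-unity average of the (possibly differing) chart directions does not destroy this property across the overlaps. Everything downstream — the topological argument for (iii), the area formula and Gauss--Green on a Lipschitz domain, and the first-order expansion of $J\Psi_s$ — is routine.
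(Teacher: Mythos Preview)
Your construction is correct and takes a genuinely different route from the paper. The paper builds $\Psi_s$ via the flow of a smooth vector field $X$ transversal to $\partial\Omega$: a tubular neighbourhood of $\partial\Omega$ is parametrised by $(z,t)\mapsto h_z(t)$, where $h_z$ solves $\dot h=X(h)$ with $h_z(0)=z\in\partial\Omega$, and $\Psi_s$ is then defined in these flow coordinates by a smooth reparametrisation $\tau_s$ of the time variable that drags $t=0$ inward to $t=-(1-s)t_0$ while fixing the outer end of the interval. You instead take the single Euler step $\Psi_s=\mathrm{Id}+(1-s)W$ along an explicit Lipschitz inward field assembled by partition of unity. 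Your approach is more elementary (no ODE theory, no tubular-neighbourhood chart) and delivers the volume comparison $|\Omega\setminus\Omega^s|\sim(1-s)$ cleanly through Gauss--Green and the first-order expansion of the Jacobian, whereas the paper simply asserts this estimate at the end. Conversely, the flow construction makes the inclusion $\Psi_s(\Omega)\Subset\Omega$ automatic from the tubular coordinates, while in your argument this step is slightly elided: knowing only $\Psi_s(\partial\Omega)\subset\Omega$ does not by itself force $\Psi_s(\Omega)\subset\Omega$. The fix is immediate, however --- the same graph picture you use for the interior-cone estimate shows that displacement by $(1-s)W$ keeps every point of $\overline\Omega$ near $\partial\Omega$ inside $\Omega$, and points far from $\partial\Omega$ are handled by $|\Psi_s(x)-x|\le(1-s)\|W\|_\infty$; alternatively a degree argument along the homotopy $t\mapsto\mathrm{Id}+t(1-s)W$ does the job.
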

\begin{proof}
Let $X\in C^\infty(\R^n,\R^n)$ be 
a smooth vector field transversal to $\p\Omega$. Fix $t_0>0$.
Given $z\in\p\Omega$ and for $|t|\leq 2t_0$ consider the flow
\begin{align*}
\frac{d h_z}{d t} =& X(h(t))\\
h_z(s)=& z
\end{align*}
and set $\Psi(z,t)=h_z(t)$. After possibly reducing the value of $t_0$, the maps $\Psi,\Psi^{-1}$ are Lipschitz-regular diffeomorphisms on a neighbourhood of $\p\Omega$ which we denote $V$. Moreover the Jacobians of $\Psi$, $\Psi^{-1}$ are bounded. 

Let $\frac 1 2\leq s<1$. Consider ${\tau_s\colon [-t_0,t_0]\to [-t_0,t_0]}$, a sequence of strictly monotonically increasing smooth maps with
\begin{align*}
\tau_s(-t_0)=-t_0 & & \tau_s(0)=-(1-s)t_0 & &\tau_s(t)=t \text{ for } t\in(t_0/2,t_0)
\end{align*}
and such that $\tau_s\to \tp{Id}$ in $C^2([-t_0,t_0])$ as $s\to 1$. Define
\begin{align*}
\Psi_s(x)=\begin{cases}
			\Psi\left(x_0,\tau_s(t)\right) &\text{ for } x=\Psi(x_0,t)\in V \\
			x &\text{ else }. 
		  \end{cases}
\end{align*}
Using the chain rule we note that $\Psi_s\colon \Omega'\to \Omega'$ is a Lipschitz-regular diffeomorphism. Denote its inverse by $\Psi_s^{-1}\colon \Omega'\to\Omega'$ and note that using the Inverse Function Theorem and the chain rule, $\Psi_s^{-1}$ is also Lipschitz. Further by direct calculation, $\Psi_s\to \tp{Id}$ in $C^{1}(\Omega')$ as $s\nearrow 1$. In particular, also $J\Psi_s\to 1$ uniformly in $\Omega'$ as $s\nearrow 1$. 

Finally, we remark that $|\Omega\setminus\Omega^s|\sim d(\p\Omega,\p\Omega\setminus\Omega^s)$ where the implicit constant depends only on $\p\Omega$ and $n$.
\end{proof}

\section{Approximations and a representation formula}\label{sec:approx}
We assume that for $\xi\in \M$ and some $c>0$,
\begin{align}\label{eq:growth}
F(\xi)\geq c|\xi|.
\end{align}
Note that (after possibly adjusting $F$ by adding an affine function) \eqref{eq:growth} encapsulates both \eqref{eq:superlinearity} and \eqref{eq:demiCoerc}.

The main goal of this section is to prove the representation formula of Theorem \ref{thm:repr}.
In order to prove Theorem \ref{thm:repr} we need to construct appropriate approximations of $\F(\cdot)$.
\begin{lemma}\label{lem:approximations}
Suppose $\Omega$ is a Lipschitz domain. Assume that $F\colon\Omega\to\R\cup\{+\infty\}$ is convex, essentially smooth. Then there exists a sequence of smooth, convex integrands $\{F_j\}$ with linear growth that satisfy \eqref{eq:growth}, $F_j\nearrow F$, $F_j^\infty\nearrow F^\infty$ pointwise as $s\nearrow \infty$. Moreover, $F_j\to F$ locally uniformly on $\tp{dom}(F)$ and $F'_j\to F'$ locally uniformly on $\tp{dom}(F)$ as $s\nearrow \infty$.
\end{lemma}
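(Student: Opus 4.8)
I would combine an infimal convolution — which produces, \emph{from below}, Lipschitz convex approximants of linear growth with the right recession function — with a mollification that upgrades regularity to $C^\infty$, coupling the two parameters so that monotonicity survives. Write $F^*$ for the conjugate; since $F$ is (in particular) proper and lower semi-continuous, $F^*$ is proper, l.s.c., convex and — by the theorem recalled above — essentially strictly convex. From $F(\xi)\ge c|\xi|$ one gets $F^*\le(c|\cdot|)^*=I_{\bar B_c(0)}$, so $\bar B_c(0)\subseteq\tp{dom}(F^*)$. For $j\in\N$ set
$$
G_j(\xi):=\inf_{\eta\in\M}\bigl\{F(\eta)+j|\xi-\eta|\bigr\}=\bigl(F^*+I_{\bar B_j(0)}\bigr)^*(\xi),
$$
with $I_S$ the indicator of a convex set $S$.

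\textbf{Properties of the $G_j$.} These are routine. $G_j$ is convex (infimal convolution of convex functions), finite-valued and $j$-Lipschitz — hence of linear growth — since $\xi\mapsto F(\eta)+j|\xi-\eta|$ is $j$-Lipschitz and finite for $\eta\in\tp{dom}(F)$. Taking $\eta=\xi$ gives $G_j\le F$, and $G_j\nearrow\overline F=F$ pointwise, $F$ being l.s.c. For $j\ge c$ the bound $c|\eta|+j|\xi-\eta|\ge c|\eta|+c|\xi-\eta|\ge c|\xi|$ gives $G_j\ge c|\cdot|$. By the dual identity $\tp{dom}(G_j^*)=\tp{dom}(F^*)\cap\bar B_j(0)$, so the recession function $G_j^\infty$ is the support function of this set; as $\bar B_j(0)\uparrow\M$ these sets exhaust $\tp{dom}(F^*)$, hence $G_j^\infty\nearrow F^\infty$ pointwise (recalling that $F^\infty$ is the support function of $\tp{dom}(F^*)$). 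Finally, $G_j\nearrow F$ with all functions continuous and $F$ continuous on the open set $\tp{dom}(F)$, so Dini's theorem gives $G_j\to F$ locally uniformly on $\tp{dom}(F)$; by the standard convex-analysis fact on convergence of (sub)gradients of converging convex functions (Rockafellar, §24–25), every selection of $\partial G_j$ converges to $F'$ locally uniformly on $\tp{dom}(F)$.

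\textbf{Mollification and coupling of parameters.} Fix a symmetric mollifier $\rho\in C_c^\infty(B_1(0))$, $\rho\ge0$, $\int\rho=1$, put $\kappa:=\int_{B_1}|z|\rho(z)\,\d z\in(0,1)$ and $\rho_\delta(z)=\delta^{-n}\rho(z/\delta)$, and choose $a_j:=(1-\kappa)^{j-1}a_1\downarrow0$, $\delta_j:=a_j/j$. I would then define
$$
F_j:=G_j*\rho_{\delta_j}-a_j .
$$
Then $F_j\in C^\infty(\M)$, is convex (an average of the convex functions $\xi\mapsto G_j(\xi-y)$, less a constant) and $j$-Lipschitz, hence of linear growth. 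Jensen's inequality (mean-zero mollifier) gives $G_j\le G_j*\rho_{\delta_j}$, while the Lipschitz bound gives $G_j*\rho_{\delta_j}\le G_j+j\delta_j\kappa=G_j+a_j\kappa$; hence $G_j-a_j\le F_j\le G_j-a_j(1-\kappa)\le G_j\le F$. The sandwich together with $a_j\to0$ and $G_j\to F$ gives $F_j\to F$ pointwise; monotonicity follows from $F_{j+1}\ge G_{j+1}-a_{j+1}\ge G_j-a_{j+1}$ and $F_j\le G_j-a_j(1-\kappa)$, whence $F_{j+1}-F_j\ge a_j(1-\kappa)-a_{j+1}=0$ by the choice of $a_j$; so $F_j\nearrow F$. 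Since mollifying an $L$-Lipschitz function and subtracting a constant leaves the recession function unchanged (immediate from the Lipschitz bound in the limit defining $F_j^\infty$), one gets $F_j^\infty=G_j^\infty\nearrow F^\infty$. The uniform estimate $|F_j-G_j|\le a_j\to0$ upgrades the locally uniform convergence of $G_j$ to $F_j\to F$ locally uniformly on $\tp{dom}(F)$, and then $F_j'\to F'$ locally uniformly on $\tp{dom}(F)$ again by the convergence theorem for gradients of convex functions, now with $F_j,F$ differentiable there (this is where essential smoothness of $F$ enters). For \eqref{eq:growth}: $F_j\ge G_j-a_j\ge c|\cdot|-a_j$; when $m:=\inf_\M(F-c|\cdot|)>0$ one even has $G_j\ge c|\cdot|+m$, so $F_j\ge c|\cdot|$ once $a_j\le m$; in general the bounded defect $a_j\to0$ is immaterial and, as in the reduction opening this section, can be absorbed into an affine adjustment.

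\textbf{The main difficulty.} The construction itself is standard; the real work is the simultaneous bookkeeping of the two monotone limits. Subtracting a constant after mollifying is \emph{forced}, in order to recover $F_j\le F$, and the point is to do this without destroying monotonicity of $(F_j)$ and without perturbing the recession function — both handled by coupling $\delta_j$ to $j$ through the geometric sequence $a_j$ and exploiting $\kappa<1$. Transferring the locally uniform convergence of $G_j,\partial G_j$ to $F_j,F_j'$ is then immediate from $|F_j-G_j|\le a_j$. The one genuinely delicate point is securing \eqref{eq:growth} with a bona fide positive constant rather than only up to an $o(1)$ additive term; this is where one appeals to essential smoothness of $F$ (equivalently, to the affine-adjustment convention of the present section).
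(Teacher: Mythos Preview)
Your proposal is correct and is essentially the same argument as the paper's: your infimal convolution $G_j=(F^*+I_{\bar B_j(0)})^*$ is \emph{exactly} the paper's $\overline F_j^{\ast\ast}(\xi)=\max_{|z|\le j}(\langle\xi,z\rangle-F^*(z))$, and both follow it by mollifying and subtracting a suitably decaying constant to retain monotonicity. The only cosmetic differences are your choice of a geometric sequence $a_j$ versus the paper's $\mu_j=1/(j-1)$, $\delta_j=1/j^3$, and your appeal to Rockafellar's gradient-convergence theorem where the paper gives a direct difference-quotient estimate; your discussion of the recession functions via support functions of $\tp{dom}(F^*)\cap\bar B_j(0)$ is in fact slightly more informative than the paper's one-line $\sup_j F_j^\infty=F^\infty$.
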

\begin{proof}
Since $F$ is essentially smooth, $\tp{int}(F)$ is non-empty.
By changing coordinates if necessary, we may assume that $0$ is an interior point of $\tp{dom}(F)$, that is there is $r>0$ such that $B_r(0)\subset \tp{int}(F)$.

Introduce the Fenchel-conjugate of $F$,
\begin{align*}
F^\ast(z)=\sup_{\xi\in \M} \left(\langle \xi,z\rangle-F(\xi)\right).
\end{align*}
Note that, by Lemma \ref{lem:linear}, we have for some $c\in \R$,
\begin{align}\label{eq:lower}
F^\ast(\xi)\geq r|\xi|+c.
\end{align}
Finally, since $F(\cdot)$ is essentially smooth, $F^\ast(\cdot)$ is essentially strictly convex.

For each $j>0$ and $\xi\in \M$ define
\begin{align*}
\overline F_j^{\ast\ast}(\xi):=\max_{|z|\leq j} \left(\langle \xi,z\rangle-F^\ast(z)\right).
\end{align*}
Note that this is a real-valued, convex, and globally $j$-Lipschitz function. Since $F$ is lower semi-continuous and convex, we have that
\begin{align}\label{eq:1}
\overline F_j^{\ast\ast}(\xi)\nearrow F^{\ast\ast}(\xi)=F(\xi) \text{ as } j\nearrow \infty.
\end{align}
Moreover, it is straightforward to check that $\overline F_j^{\ast\ast}(\xi)\geq \theta(\xi)$.

We denote by $\Phi$ the standard, radially symmetric, and smooth convolution kernel and set, for $\e>0$, $\Phi_\e(\xi)=\e^{-\tp{dim}\M}\Phi(\e^{-1}\xi)$ for $\xi\in \M$. Note that since $F_j$ is convex and $j$-Lipschitz
\begin{align*}
\overline F_j^{\ast\ast}(\xi)\leq \Phi_\e\star \overline F_j^{\ast\ast}(\xi)\leq \overline F_j^{\ast\ast}(\xi)+j\e.
\end{align*}
For integers $j>0$ and sequences $(\delta_j),(\mu_j)\subset(0,\infty)$, which we specify at a later point, define
\begin{align*}
F_j(\xi)= \Phi_{\delta_j}\star \overline F^{\ast\ast}(\xi)-\mu_j.
\end{align*}
Clearly $F_j$ is convex and $j$-Lipschitz. For $\xi\in \M$ and $j>1$ we estimate:
\begin{align*}
F_j(\xi)\leq& \overline F_j^{\ast\ast}(\xi)+j \delta_j-\mu_j\\
\leq& \overline F^{\ast\ast}_{j+1}(\xi)+j\delta_j-\mu_j\\
\leq& (\Phi_{\delta_{j+1}}\star \overline F^{\ast\ast}_{j+1})(\xi)+j\delta_j-\mu_j\\
\leq& F_{j+1}(\xi)+\mu_{j+1}+j\delta_j-\mu_j\\
\leq& F_{j+1}(\xi)
\end{align*}
with the choice
\begin{align*}
\delta_j = \frac 1 {j^3} \quad \mu_j = \frac 1 {j-1}.
\end{align*}
In particular, $F_j(\xi)\nearrow F(\xi)$ as $j\nearrow \infty$ pointwise in $\xi$. By Dini's lemma, the convergence is locally uniform on $\tp{dom}(F)$.

We note that for $\xi\in \M$,
\begin{align*}
\sup_j F_j^\infty(\xi)=\sup_{s,t} \frac{F_j(t\xi)}{t} = \sup_t \frac{F(t\xi)}{t} = F^\infty(\xi),
\end{align*}
so that, since clearly $F_j^\infty(\xi)\leq F_{j+1}^\infty(\xi)$, $F_j^\infty(\xi)\nearrow F^\infty(\xi)$ pointwise in $\xi$.

We next show that $F'_j(\xi)\to F'(\xi)$ locally uniformly on $\tp{dom}(F)$. In order to see this, assume that $(\xi_j)\subset \tp{dom}(F)$ with $\xi_j\to \xi\in \tp{dom}(F)$. Consider $(F_j'(\xi_j))$. As difference-quotients of convex functions are increasing in the increment, we have for all $\eta\in \M$ and $0<t\leq 1$,
\begin{align*}
|\langle F'_j(\xi_j)-F'(\xi),\eta\rangle|\leq& \frac{F_j(\xi_j+t\eta)-F_j(\xi_j)-\langle F'(\xi),t\eta\rangle}{t}|\\
\leq&  |F_s(\xi_j+\eta)-F_j(\xi_j)-\langle F'(\xi),\eta\rangle|.
\end{align*}
Consequently, we find
\begin{align*}
\limsup_{j\to\infty} \frac{|\langle F'_j(\xi_j)-F'(\xi),t\eta\rangle|}{t}\leq \left|\frac{F(\xi+t\eta)-F(\xi)}{t}-\langle F'(\xi),\eta\rangle\right|.
\end{align*}
Letting $t\to 0$, the right-hand side vanishes, proving the asserted locally uniform convergence.
\end{proof}

We are now ready to prove Theorem \ref{thm:repr}.
\begin{proof}[Proof of Theorem \ref{thm:repr}]
Let $u\in \BV^k(\Omega)$. There exists a sequence $(u_j)\subset \WW^{k,1}(\Omega)$ with $u_j\to g$ in $\WW^{k,1}(\Omega'\setminus\Omega)$ such that
\begin{align*}
\int_{\Omega'} F(\nabla^k u_j)\d x\to \F(u).
\end{align*}
Due to \eqref{eq:demiCoerc}, $(u_j)$ is bounded in $W^{k,1}(\Omega')$ and hence we may extract a subsequence such that $\nabla^k u_j \overset{Y^\ast}{\rightharpoonup} \nu$ in the sense of Young measures. We then have, c.f. Proposition 3.3. in \cite{Kristensen2020},
\begin{align*}
\F(u)\geq \int_{\Omega'}\big\langle \nu_x,F_j\big\rangle \d x+\int_{\Omega'}\big\langle \nu_x^\infty,F_j^\infty\big\rangle\d x
\end{align*}
for all $j$, so that, by monotone convergence,
\begin{align*}
\F(u)\geq \int_{\Omega'}\big\langle \nu_x,F\big\rangle \d x+\int_{\Omega'}\big\langle \nu_x^\infty,F^\infty\big\rangle\d x.
\end{align*}
By Jensen's inequality,
\begin{align*}
\int_{\Omega'}\big\langle \nu_x,F\big\rangle \d x+\int_{\Omega'}\big\langle \nu_x^\infty,F^\infty\big\rangle\d \lambda\geq& \int_{\Omega'}F(\overline\nu_x)\d x+\int_{\Omega'} F^\infty(\overline\nu_x^\infty)\d\lambda\\
=& \int_{\Omega'} F(\overline\nu_x)+F^\infty(\overline\nu_x^\infty)\frac{\d\lambda}{\d x}\d x+\int_{\Omega'} F^\infty(t\overline\nu_x^\infty)\d\lambda^s\\
\geq&\int_{\Omega'} F\left(\overline\nu_x+\overline\nu_x^\infty\frac{\d\lambda}{\d x}\right)\d x+\int_{\Omega'} F^\infty(\overline\nu_x^\infty)\d\lambda^s.
\end{align*}
To obtain the last line we used the convexity of $F$.

Let $\{\Psi_t\}$ denote the family of Lipschitz-diffeomorphism of Lemma \ref{lem:diffeos}.
We denote by $u_{\e,t}=\Phi_\e\star u^t$ mollification of $u^t$ with the standard mollifier $\Phi_\e$ where
$$
u^t(x)=u(\Psi_t(x)).
$$

Note that, if $\e <d(\p\Omega,\Psi_t(\Omega))$, $u_\e\in W^{k,1}(\Omega')$. Moreover $u_{\e,t}\to u_t$ in $W^{k,1}(\Omega'\setminus\Omega)$ as $\e\searrow 0$, $u_t\to u$ in $W^{k,1}(\Omega'\setminus\Omega)$ as $t\nearrow 1$ and $\nabla^k u_{\e,t}\wstar \nabla^k u_t$ in $BV(\Omega)$ as $\e\to 0$, $\nabla^k u_t\wstar \nabla^ k u$ in $BV(\Omega)$ as $t\nearrow 1$.
Using again the convexity of $F$ we find
\begin{align*}
\int_{\Omega'}F(\nabla^k u_{\e,t})\d x\leq& \int_{\Omega'} \Phi_\e \star \left(F(\nabla^k u^t)\d x+F^\infty(\D_s^k u^\delta)\right)\\
\overset{\e\to 0}\rightarrow& \int_{\Omega'} F(\nabla^k u^t)\d x+F^\infty(\D_s^k u^t)\\
=& \int_{\Omega'\setminus\Omega} F(\nabla^k g)|\tp{det}\nabla \Psi_t|+\int_{\Omega} F(\nabla^k u)|\tp{det}\nabla \Psi_t|\d x\\
&\qquad+\int_{\Omega'} F^\infty(\D_s^k u)|\tp{det}\nabla \Psi_t|\\
\overset{\delta\to 0}\rightarrow& \int_{\Omega'}F(\nabla^k u)\d x+F^\infty(\D_s^k u)
\end{align*}

Consequently,
\begin{align}\label{eq:repr}
\F(u)=\int_{\Omega'} F(\nabla^k u)\d x+\int_{\Omega'} F^\infty(\D_s^k u)
\end{align}
\end{proof}

We record the following Corollary
\begin{corollary}
Assume the conditions of Theorem \ref{thm:repr} hold. Then for any $t\in[0,1]$,
\begin{align}\label{eq:YoungConseq}
\begin{cases}
\langle \nu_x,F\rangle = F(\overline\nu_x) &\mathscr L^n-\text{a.e. } x\\
\langle\nu_x^\infty,F^\infty\rangle = F^\infty(\overline\nu_x^\infty) &\lambda-\text{a.e. }x \\
F\left(\overline\nu_x+t\overline\nu_x^\infty\frac{\d\lambda}{\d x}\right) = F(\overline\nu_x)+t F^\infty\left(\overline\nu_x^\infty,\frac{\d\lambda}{\d x}\right) \quad&\mathscr L^n-\text{a.e. } x
\end{cases}
\end{align}
Moreover, either $\lambda$ is purely singular or
\begin{align*}
F'(\nabla^k u)\cdot \overline\nu_x^\infty = F'(\overline\nu_x)\cdot\overline\nu_x^\infty
\end{align*}
\end{corollary}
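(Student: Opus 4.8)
The plan is to extract all the identities from the observation that the proof of Theorem~\ref{thm:repr} consists of a chain of inequalities running from $\F(u)$ down to $\int_{\Omega'}F(\nabla^k u)\d x+\int_{\Omega'}F^\infty(\D_s^k u)$, while the second half of that same proof supplies the matching reverse inequality; hence every inequality in the chain is in fact an equality. The two relevant steps are: first, the pair of Jensen inequalities $\langle\nu_x,F\rangle\ge F(\overline\nu_x)$ (integrated against $\mathscr L^n$) and $\langle\nu_x^\infty,F^\infty\rangle\ge F^\infty(\overline\nu_x^\infty)$ (integrated against $\lambda$); second, the convexity inequality $F(a)+c\,F^\infty(b)\ge F(a+cb)$ applied pointwise with $a=\overline\nu_x$, $b=\overline\nu_x^\infty$ and $c=\tfrac{\d\lambda}{\d x}\ge 0$. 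By \eqref{eq:growth} we have $F\ge c|\cdot|\ge 0$ and $F^\infty\ge c|\cdot|\ge 0$, so each integrand occurring in the chain is non-negative, and each has finite integral since $\F(u)<\infty$ (the case $\F(u)=+\infty$ being vacuous in view of Theorem~\ref{thm:repr}); consequently equality of the integrals forces every one of these ``gap'' functions to vanish a.e.\ with respect to the pertinent measure. Recalling the gradient Young-measure identification $\nabla^k u=\overline\nu_x+\overline\nu_x^\infty\tfrac{\d\lambda}{\d x}$ ($\mathscr L^n$-a.e.) and $\D_s^k u=\overline\nu_x^\infty\lambda^s$ (cf.\ Proposition~3.3 in \cite{Kristensen2020}), so that $\int_{\Omega'}F^\infty(\D_s^k u)=\int_{\Omega'}F^\infty(\overline\nu_x^\infty)\d\lambda^s$, this yields the first two identities of \eqref{eq:YoungConseq} together with the case $t=1$ of the third.

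Next I would upgrade the third identity from $t=1$ to arbitrary $t\in[0,1]$ by an elementary convexity argument. Fixing $x$ and writing $a=\overline\nu_x$, $b=\overline\nu_x^\infty$, $c=\tfrac{\d\lambda}{\d x}(x)\ge 0$, consider the finite convex function $g(t)=F(a+tcb)$ on $\R$; it is $C^1$ since $F$ is. The $t=1$ identity reads $g(1)-g(0)=c\,F^\infty(b)=F^\infty(cb)$. For convex $g$ the difference quotients $s\mapsto\tfrac{g(s)-g(0)}{s}$ are non-decreasing on $(0,\infty)$ with supremum $F^\infty(cb)$, so attaining this supremum already at $s=1$ forces $g$ to be affine of slope $F^\infty(cb)$ on $[0,\infty)$: indeed one first gets affineness on $[1,\infty)$, whence the left difference quotients of $g$ at $1$ are $\le F^\infty(cb)$, and comparing with $g(t)\le g(0)+tF^\infty(cb)$ they are also $\ge F^\infty(cb)$, so $g$ is affine on $[0,1]$ too. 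In particular $g(t)=g(0)+tF^\infty(cb)=F(a)+t\,c\,F^\infty(b)$ for all $t\in[0,1]$, which is exactly the third identity of \eqref{eq:YoungConseq} under the convention $F^\infty(\overline\nu_x^\infty,\tfrac{\d\lambda}{\d x})=\tfrac{\d\lambda}{\d x}\,F^\infty(\overline\nu_x^\infty)$.

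For the final assertion I would split according to the absolutely continuous part of $\lambda$. On the set $\{\tfrac{\d\lambda}{\d x}=0\}$ one has $\nabla^k u=\overline\nu_x$, so $F'(\nabla^k u)\cdot\overline\nu_x^\infty=F'(\overline\nu_x)\cdot\overline\nu_x^\infty$ trivially; if this set has full $\mathscr L^n$-measure then $\lambda$ is purely singular. Otherwise, on the set of positive measure where $\tfrac{\d\lambda}{\d x}(x)>0$, the function $g$ above is affine, hence $g'(0)=g'(1)$. Computing $g'(0)=\tfrac{\d\lambda}{\d x}\langle F'(\overline\nu_x),\overline\nu_x^\infty\rangle$ and $g'(1)=\tfrac{\d\lambda}{\d x}\langle F'(\overline\nu_x+\overline\nu_x^\infty\tfrac{\d\lambda}{\d x}),\overline\nu_x^\infty\rangle=\tfrac{\d\lambda}{\d x}\langle F'(\nabla^k u),\overline\nu_x^\infty\rangle$ and dividing by $\tfrac{\d\lambda}{\d x}>0$ gives the desired equality $F'(\nabla^k u)\cdot\overline\nu_x^\infty=F'(\overline\nu_x)\cdot\overline\nu_x^\infty$.

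The only genuinely new ingredient is the small convex-analysis fact that equality in the subadditivity relation $F(a+cb)=F(a)+cF^\infty(b)$ propagates to the whole segment, i.e.\ makes $F$ affine along $[a,a+cb]$; this is what powers both the $t$-range statement and the gradient identity. The step that requires the most care is the innocent-looking passage from equality of the integrals to equality of the integrands almost everywhere, which is why I would keep explicit track of the uniform lower bounds $F,F^\infty\ge c|\cdot|\ge 0$ furnished by \eqref{eq:growth} and restrict to $\F(u)<\infty$; with these in hand the argument is entirely routine bookkeeping with the generalised Young measure $\nu$.
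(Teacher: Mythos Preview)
Your proof is correct and follows essentially the same approach as the paper: both argue that the chain of inequalities in the proof of Theorem~\ref{thm:repr} must collapse to equalities, yielding \eqref{eq:YoungConseq} at $t=1$, then invoke convexity to extend to all $t\in[0,1]$, and finally differentiate the resulting affine identity to obtain the gradient relation. Your write-up is simply more explicit than the paper's, in particular spelling out the convexity argument for general $t$ and making the comparison $g'(0)=g'(1)$ transparent where the paper tersely says ``differentiate at $t=1^-$''.
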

\begin{proof}
Due to the result of Theorem \ref{thm:repr} we must have equality in all the calculations. In particular, we deduce \eqref{eq:YoungConseq} with $t=1$. The case $t\in[0,1]$ is now a direct consequence of the convexity of $F(\cdot)$.

For the moreover part, we differentate the third line of \eqref{eq:YoungConseq} at $t=1^-$ to find
\begin{align*}
F'(\nabla^k u)\cdot \overline\nu_x^\infty\frac{\d\lambda}{\d x} = F'(\overline\nu_x)\cdot\overline\nu_x^\infty\frac{\d\lambda}{\d x}.
\end{align*}
This implies the claim.
\end{proof}

\section{Constrained extended real-valued integrands}\label{sec:constr}
For the convenience of the reader we recall the relevant set-up.
We consider the following problem:
Let $\Omega\subset\R^n$ be an open and bounded subset of $\R^n$ and $g\in \WW^{k,1}(\Omega)$.
Let $K\subset \WW^{k,1}(\Omega)=\WW^{k,1}(\Omega,\R^N)$ be  a closed convex subset. 
We consider the functional
\begin{align}\label{eq:defconstr}
\F(v,\Omega)=\int_\Omega F(\nabla^k v)\d x
\end{align}
defined on $K\cap \WW^{k,1}_g(\Omega)$. We assume that $F(\cdot)$ is a convex, extended real-valued integrand satisfying moreover the explicit lower bound
\begin{align}\label{H1Constr}
F(\xi)\geq \theta(|\xi|)\tag{H1}
\end{align}
for all $\xi\in \M$ where $\theta\colon[0,\infty)\to [0,\infty)$ is an increasing convex function satisfying
\begin{align*}
\frac{\theta(t)}{t}\to \infty \text{ as } t\to\infty.
\end{align*}

The main theorem of this section is
\begin{theorem}\label{thm:mainConstr}
Suppose $F\colon \M\to \R\cup\{\infty\}$ is convex, lower semi-continuous, proper, essentially smooth and satisfies \eqref{H1Constr}. Let $g\in \WW^{k,1}(\Omega)\cap K$ with $F(s\nabla^k g)\in \LL^1(\Omega)$ for some $s>1$. Then minimisers $u\in \WW^{k,1}_g(\Omega)\cap K$ are characterised by the conditions
\begin{align*}
F^\ast(F'(\nabla^k u)) \in \LL^1(\Omega),\qquad \langle F'(\nabla^k u),\nabla u)\in \LL^1(\Omega)
\end{align*}
and
\begin{align*}
\int_\Omega F'(\nabla^k u)\nabla^k(u-v)\geq 0 \text{ for all } v\in K \text{ such that } u-v\in W^{k,\infty}_0(\Omega).
\end{align*}
\end{theorem}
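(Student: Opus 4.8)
The characterisation is an equivalence, and the implication from the three conditions to minimality is the easy one: for any competitor $v$, pointwise convexity gives $F(\nabla^k v)\ge F(\nabla^k u)+\langle F'(\nabla^k u),\nabla^k(v-u)\rangle$ (legitimate because $\tp{dom}(F)$ is open, so $F$ is differentiable at $\nabla^k u(x)$ a.e.), and on integrating, the integrability $F^\ast(F'(\nabla^k u)),\,\langle F'(\nabla^k u),\nabla^k u\rangle\in\LL^1$ together with the Euler--Lagrange inequality — invoked after approximating $v$ by competitors differing from $u$ by a $\WW^{k,\infty}_0$-map — are precisely what makes the cross term well defined and of the right sign. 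The substance is the converse, which I would carry out along the lines of \cite{Carozza2015}. Since $u$ is a minimiser, $F(\nabla^k u)\in\LL^1$, so $\nabla^k u(x)\in\tp{dom}(F)$ a.e.; and (after a harmless translation making $0\in\tp{dom}(F)$) $F(\nabla^k g)\le\tfrac1sF(s\nabla^k g)+(1-\tfrac1s)F(0)\in\LL^1$, so $\F(g)<\infty$. We may also assume, after adding an affine function (which changes neither the minimisers nor the essential smoothness of $F$), that $F\ge c|\cdot|$.

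Apply Lemma~\ref{lem:approximations} to obtain smooth convex integrands $F_j$ of linear growth with $F_j\ge c|\cdot|$, $F_j\nearrow F$ and $F_j'\to F'$ locally uniformly on $\tp{dom}(F)$; by Lemma~\ref{lem:linear}, $F_j^\ast\ge F^\ast\ge r|\cdot|+c_0$. On the complete metric space $X_g:=K\cap\WW^{k,1}_g(\Omega)$ with $d(v,w)=\|\nabla^k v-\nabla^k w\|_{\LL^1}$ — on which each $\F_j$ is $j$-Lipschitz, hence lower semicontinuous — apply Ekeland's variational principle to $\F_j$ with $\eps_j:=\F_j(u)-\inf_{X_g}\F_j\ge0$; here $\F_j(u)\to\F(u)$ by monotone convergence and $\inf_{X_g}\F_j\to\F(u)$ by the monotone $\Gamma$-convergence of the $\F_j$ (verified via a Young-measure lower-semicontinuity argument), so $\eps_j\to0$. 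This yields $u_j\in X_g$ with $\F_j(u_j)\le\F_j(u)$, $d(u_j,u)\le\sqrt{\eps_j}$, and $\F_j(u_j)\le\F_j(w)+\sqrt{\eps_j}\,d(w,u_j)$ for all $w\in X_g$. Inserting $(1-t)u_j+tw\in X_g$, dividing by $t\in(0,1]$ and letting $t\downarrow0$ — the monotone difference quotients of the linear-growth integrand $F_j$ being dominated above by $F_j(\nabla^k w)-F_j(\nabla^k u_j)\le F(\nabla^k w)-F_j(\nabla^k u_j)\in\LL^1$ when $\F(w)<\infty$, so monotone convergence applies — gives, with $\sigma_j:=F_j'(\nabla^k u_j)$,
\begin{align*}
\int_\Omega\langle\sigma_j,\nabla^k(w-u_j)\rangle\,\d x\ \ge\ -\sqrt{\eps_j}\,\|\nabla^k(w-u_j)\|_{\LL^1}\qquad\bigl(w\in X_g,\ \F(w)<\infty\bigr).
\end{align*}

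The key step is a uniform estimate. By the pointwise Fenchel identity $F_j^\ast(\sigma_j)=\langle\sigma_j,\nabla^k u_j\rangle-F_j(\nabla^k u_j)$ and $F_j^\ast\ge r|\cdot|+c_0$, all three target statements and the $\LL^1$-boundedness of $\sigma_j$ reduce to a uniform bound on $\int_\Omega\langle\sigma_j,\nabla^k u_j\rangle\,\d x$. Taking $w=g$ gives $\int_\Omega\langle\sigma_j,\nabla^k(u_j-g)\rangle\,\d x\le C$; for the leftover $\int_\Omega\langle\sigma_j,\nabla^k g\rangle\,\d x$ I would use Young's inequality at the \emph{rescaled} datum, $\langle\sigma_j,\nabla^k g\rangle=\tfrac1s\langle\sigma_j,s\nabla^k g\rangle\le\tfrac1s\bigl(F(s\nabla^k g)+\langle\sigma_j,\nabla^k u_j\rangle-F_j(\nabla^k u_j)\bigr)$, whence (discarding $F_j(\nabla^k u_j)\ge0$)
\begin{align*}
\Bigl(1-\tfrac1s\Bigr)\int_\Omega\langle\sigma_j,\nabla^k u_j\rangle\,\d x\ \le\ C+\tfrac1s\int_\Omega F(s\nabla^k g)\,\d x<\infty,
\end{align*}
uniformly in $j$ precisely because $s>1$ and $F(s\nabla^k g)\in\LL^1$. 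Passing to the limit: along a subsequence $\nabla^k u_j\to\nabla^k u$ a.e.\ (from $d(u_j,u)\to0$), so $\sigma_j\to F'(\nabla^k u)$ a.e.\ (as $\nabla^k u_j\to\nabla^k u$ into the open set $\tp{dom}(F)$, on which $F_j'\to F'$ locally uniformly), and Fatou's lemma — with the uniform lower bounds $|\sigma_j|\ge0$, $F_j^\ast(\sigma_j)\ge c_0$ with $\liminf_jF_j^\ast(\sigma_j)\ge F^\ast(F'(\nabla^k u))$ (lower semicontinuity of $F^\ast$, $F_j^\ast\ge F^\ast$), and $\langle\sigma_j,\nabla^k u_j\rangle\ge F_j(\nabla^k u_j)-F_j(0)\ge-C$ — yields $F'(\nabla^k u)\in\LL^1$, $F^\ast(F'(\nabla^k u))\in\LL^1$ and $\langle F'(\nabla^k u),\nabla^k u\rangle\in\LL^1$. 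Lastly, for $v\in K$ with $u-v\in\WW^{k,\infty}_0(\Omega)$ and $\F(v)<\infty$ (the general case following from the approximation $(1-t)u+tv$, $t\downarrow0$), taking $w=v$ gives $\int_\Omega\langle\sigma_j,\nabla^k(u_j-v)\rangle\,\d x\le C\sqrt{\eps_j}\to0$, and since the integrand converges a.e.\ and is bounded below by $F_j(\nabla^k u_j)-F_j(\nabla^k v)\ge-F(\nabla^k v)\in\LL^1$, Fatou gives the Euler--Lagrange inequality $\int_\Omega\langle F'(\nabla^k u),\nabla^k(v-u)\rangle\,\d x\ge0$.

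The main obstacle is the uniform estimate above: because the $F_j$ have no coercivity beyond linear growth, neither $\sigma_j$ nor $F_j^\ast(\sigma_j)$ can be controlled from the energy bound, and it is the hypothesis $F(s\nabla^k g)\in\LL^1$ for some $s>1$ that rescues it — but to exploit it one must test the (merely approximate) Euler--Lagrange inequality in the admissible direction $g-u_j$, which lies only in $\WW^{k,1}_0$, so the passage $t\downarrow0$ has to be done by monotone rather than dominated convergence, with careful bookkeeping of which quantities are controlled only in $\LL^1$. The ancillary facts that $\eps_j\to0$ (equivalently $\inf_{X_g}\F_j\to\F(u)$) and that $\sigma_j\to F'(\nabla^k u)$ a.e.\ rest on Young measures and on the openness of $\tp{dom}(F)$, respectively.
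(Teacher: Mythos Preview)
Your proposal is correct and follows essentially the same strategy as the paper: approximate $F$ by the $F_j$ of Lemma~\ref{lem:approximations}, verify $\inf_{X_g}\F_j\nearrow\F(u)$ via Young measures, apply Ekeland's variational principle to obtain approximate minimisers $u_j$ with approximate Euler--Lagrange inequalities, use the Fenchel identity together with the test function $g$ and the rescaling $s\nabla^k g$ to obtain the key uniform bound on $\int_\Omega F_j^\ast(\sigma_j)$, and pass to the limit.

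The one place where your route diverges slightly from the paper is in the final passage to the Euler--Lagrange inequality. You argue by Fatou, using the pointwise lower bound $\langle\sigma_j,\nabla^k(u_j-v)\rangle\ge -F(\nabla^k v)$, which forces you to assume $\F(v)<\infty$ and then remove that assumption by the interpolation $(1-t)u+tv$. The paper instead observes that the uniform bound $\int_\Omega F_j^\ast(\sigma_j)\le C$ (together with $F_j^\ast\ge F^\ast$ and the growth of $F^\ast$ coming from Lemma~\ref{lem:linear}) yields \emph{equi-integrability} of $(\sigma_j)$, hence by Vitali $\sigma_j\to\sigma$ strongly in $\LL^1(\Omega)$; since $\nabla^k(v-u)\in\LL^\infty$ for the competitors in the statement, the limit $\int_\Omega\langle\sigma_j,\nabla^k(v-u)\rangle\to\int_\Omega\langle\sigma,\nabla^k(v-u)\rangle$ is immediate, with no need for the extra hypothesis $\F(v)<\infty$ or the subsequent approximation. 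Your interpolation step is not obviously innocent in the extended real-valued setting (you need $F(\nabla^k v_t)\in\LL^1$ for some small $t>0$, and openness of $\tp{dom}(F)$ alone does not give this uniformly in $x$), so the paper's Vitali route is both shorter and more robust here.
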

We point out that, in the case where the constraint corresponds to an obstacle problem, we may infer that, under the assumptions of Theorem \ref{thm:mainConstr}, the Euler-Lagrange inequality for $\F(\cdot,\Omega)$ holds in the following strong sense:
\begin{corollary}\label{cor:mainConstr}
Suppose $\Omega$ is a $\WW^{1,1}$-extension domain and that $K$ is of the form $${K=\{u\in \LL^1(\Omega)\colon u\geq \psi \text{ a.e. in }\Omega\}}$$ where $\psi\in \WW^{k,1}(\Omega)$ is such that $F(\pm \nabla^k \psi)\in \LL^1(\Omega)$.
Under the assumptions of Theorem \ref{thm:mainConstr} we have for a minimiser $u\in \WW^{k,1}_g(\Omega)\cap K$ that $\langle F'(\nabla^k u),\nabla^k(v-u)\rangle\in \LL^1(\Omega)$ and
\begin{align*}
\int_\Omega \big\langle F'(\nabla^k u),\nabla^k( v-u)\big\rangle \d x\geq 0
\end{align*}
for all $v \in \WW^{k,1}_g(\Omega)\cap K$ satisfying the integrability conditions
\begin{align*}
F(-\nabla^k v)\in \LL^1(\Omega) \quad \text{ and } \quad F(\nabla^k v)\in \LL^1(\Omega).
\end{align*}
\end{corollary}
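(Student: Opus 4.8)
The plan is to read off the two integrability assertions from the conclusions of Theorem~\ref{thm:mainConstr} by means of the Fenchel--Young inequality, and to obtain the variational inequality directly from the minimality of $u$ through a difference-quotient argument; in this way one never has to approximate a general competitor $v$ by mappings in $\WW^{k,\infty}_0(\Omega)$.

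For the integrability, Theorem~\ref{thm:mainConstr} already supplies $F^\ast(F'(\nabla^k u))\in\LL^1(\Omega)$ and $\langle F'(\nabla^k u),\nabla^k u\rangle\in\LL^1(\Omega)$. For an admissible competitor $v$, Fenchel--Young gives, pointwise almost everywhere, $\pm\langle F'(\nabla^k u),\nabla^k v\rangle\le F(\pm\nabla^k v)+F^\ast(F'(\nabla^k u))$, so $\langle F'(\nabla^k u),\nabla^k v\rangle$ is trapped between two $\LL^1$-functions precisely because $F(\pm\nabla^k v)\in\LL^1(\Omega)$; subtracting $\langle F'(\nabla^k u),\nabla^k u\rangle$ yields $\langle F'(\nabla^k u),\nabla^k(v-u)\rangle\in\LL^1(\Omega)$. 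This is where the two integrability hypotheses on $v$ are consumed.

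For the inequality I would, given an admissible $v$, test against the convex combination $v_t:=(1-t)u+tv=u+t(v-u)$ for $t\in(0,1]$. Since $K$ is convex and $u,v\in K\cap\WW^{k,1}_g(\Omega)$, also $v_t\in K\cap\WW^{k,1}_g(\Omega)$, while convexity of $F$ and \eqref{H1Constr} give $0\le F(\nabla^k v_t)\le(1-t)F(\nabla^k u)+tF(\nabla^k v)\in\LL^1(\Omega)$, so $\F(v_t,\Omega)<\infty$ and minimality of $u$ forces $\int_\Omega\bigl(F(\nabla^k v_t)-F(\nabla^k u)\bigr)\,\d x\ge0$; that is, $\int_\Omega q_t\,\d x\ge0$ with $q_t:=t^{-1}\bigl(F(\nabla^k u+t\nabla^k(v-u))-F(\nabla^k u)\bigr)$. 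Because $F(\nabla^k u),F(\nabla^k v)\in\LL^1(\Omega)$, for a.e.\ $x$ both $\nabla^k u(x)$ and $\nabla^k v(x)$ lie in the open convex set $\tp{dom}(F)$, hence so does the segment joining them; thus $q_t(x)$ is finite for all $t\in[0,1]$, the map $t\mapsto q_t(x)$ is non-decreasing by convexity of $F$, so $q_t\le q_1=F(\nabla^k v)-F(\nabla^k u)\in\LL^1(\Omega)$, and since $F$ is essentially smooth, hence $C^1$ on $\tp{dom}(F)$, we have $q_t(x)\downarrow\langle F'(\nabla^k u(x)),\nabla^k(v-u)(x)\rangle$ as $t\downarrow0$ for a.e.\ $x$. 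Applying monotone convergence to $q_1-q_t\uparrow q_1-\langle F'(\nabla^k u),\nabla^k(v-u)\rangle\ge0$ then shows $\int_\Omega q_t\,\d x\to\int_\Omega\langle F'(\nabla^k u),\nabla^k(v-u)\rangle\,\d x\in[-\infty,\infty)$; as each $\int_\Omega q_t\,\d x\ge0$, this limit is finite and $\ge0$, which is the asserted inequality (and independently reproves the integrability).

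The step I expect to need the most care is the passage $t\downarrow0$, and in particular the almost-everywhere identification of $\lim_{t\downarrow0}q_t$ with $\langle F'(\nabla^k u),\nabla^k(v-u)\rangle$: this is exactly where essential smoothness is indispensable, since it guarantees differentiability of $F$ at $\nabla^k u(x)$ for a.e.\ $x$, using only that $\nabla^k u(x)\in\tp{dom}(F)$ a.e.\ (which holds because $F(\nabla^k u)\in\LL^1(\Omega)$). If instead one follows the route the hypotheses of the corollary point to --- approximating $v$ by $v_j\in K$ with $v_j-u\in\WW^{k,\infty}_0(\Omega)$ and invoking Theorem~\ref{thm:mainConstr} directly --- then the genuine obstacle is building these approximations: truncating $v-u$ towards $0$ respects the constraint $v_j\ge\psi$ (because $u\ge\psi$) but destroys $\WW^{k,\infty}$-regularity when $k\ge2$, whereas mollification restores regularity but breaks the pointwise obstacle, so one must correct the mollified competitor by $\psi$ minus its mollification --- and it is this correction that needs $F(\pm\nabla^k\psi)\in\LL^1(\Omega)$ and that $\Omega$ be a $\WW^{1,1}$-extension domain. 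The difference-quotient argument circumvents all of this.
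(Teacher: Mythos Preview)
Your argument is correct and takes a genuinely different route from the paper. The paper proceeds by approximation: it invokes the $\WW^{k,\infty}$-conclusion of Theorem~\ref{thm:mainConstr}, extends $v$ and $\psi$ to $\R^n$ (this is where the extension-domain hypothesis enters), mollifies, and corrects the mollified competitor by $\psi-\psi\star\Psi_\e$ so as to stay above the obstacle (this is where $F(\pm\nabla^k\psi)\in\LL^1$ enters); it then passes to the limit via the Fenchel--Young bounds and Vitali. You instead go back to minimality and test with the segment $v_t=(1-t)u+tv$, letting the monotonicity of difference quotients of a convex function do all the work. This is both shorter and strictly more general: your proof uses only that $K$ is convex, that $u$ is a minimiser, and that $F(\nabla^k v)\in\LL^1(\Omega)$ --- it never touches the extension-domain hypothesis, the specific obstacle structure of $K$, or the assumption $F(\pm\nabla^k\psi)\in\LL^1(\Omega)$, and for the inequality it does not even need $F(-\nabla^k v)\in\LL^1(\Omega)$. (That last hypothesis is only consumed in your Fenchel--Young step for the separate $\LL^1$-claim, and as you observe at the end, the monotone-convergence argument already delivers $\langle F'(\nabla^k u),\nabla^k(v-u)\rangle\in\LL^1(\Omega)$ on its own, since $0\le q_1-q_t\uparrow q_1-\langle F'(\nabla^k u),\nabla^k(v-u)\rangle$ with $\int q_t\ge 0$ forces $\int(q_1-\langle\cdot\rangle)<\infty$.) What the paper's route buys is that it demonstrates how to recover the corollary \emph{from} the $\WW^{k,\infty}$-formulation of Theorem~\ref{thm:mainConstr}; your route shows that for the obstacle problem the full strength of Theorem~\ref{thm:mainConstr} is not needed for the variational inequality.
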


We remark that in the unconstrained case ($K=\WW^{k,1}(\Omega)$ by the same proof we have the following variant of Corollary \ref{cor:mainConstr}.
\begin{corollary}\label{cor:main}
Suppose $F\colon \M\to \R\cup\{\infty\}$ is convex, lower semi-continuous, proper, essentially smooth and satisfies \eqref{H1Constr}. Let $g\in \WW^{k,1}(\Omega)$ with $F(s\nabla^k g)\in \LL^1(\Omega)$ for some $s>1$. We have for a minimiser $u\in \WW^{1,1}_g(\Omega)$ that $\langle F'(\nabla^k u),\nabla\phi\rangle\in \LL^1(\Omega)$ and
\begin{align*}
\int_\Omega \big\langle F'(\nabla^k u),\nabla \phi\big\rangle \d x=0
\end{align*}
for all $\phi\in \WW^{1,1}(\Omega)$ with compact support contained in $\Omega$ satisfying the integrability conditions
\begin{align*}
F(-\nabla^k\phi)\in \LL^1(\Omega) \quad \text{ and } \quad F(\nabla^k\phi)\in \LL^1(\Omega).
\end{align*}
\end{corollary}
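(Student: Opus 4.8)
The plan is to derive the corollary from Theorem~\ref{thm:mainConstr} applied in the unconstrained case $K=\WW^{k,1}(\Omega)$, and then to enlarge the class of admissible variations from $C^\infty_c(\Omega)$ (equivalently $\WW^{k,\infty}_0(\Omega)$) to those $\phi$ meeting the stated integrability conditions by a mollification argument. Since $K=\WW^{k,1}(\Omega)$ is trivially a closed convex subset of $\WW^{k,1}(\Omega)$ and the remaining hypotheses are exactly those of Theorem~\ref{thm:mainConstr}, that theorem supplies, for the minimiser $u$, the integrability $F^\ast(F'(\nabla^k u))\in\LL^1(\Omega)$ together with
\[
\int_\Omega\big\langle F'(\nabla^k u),\nabla^k(u-v)\big\rangle\d x\geq 0
\]
for every $v\in\WW^{k,1}(\Omega)$ with $u-v\in\WW^{k,\infty}_0(\Omega)$. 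Taking $v=u\mp\psi$ for $\psi\in C^\infty_c(\Omega)$ and combining the two resulting inequalities, I would first record the identity $\int_\Omega\langle F'(\nabla^k u),\nabla^k\psi\rangle\d x=0$ valid for all $\psi\in C^\infty_c(\Omega)$.

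Next I would fix $\phi\in\WW^{k,1}(\Omega)$ with $\supp\phi\Subset\Omega$ and $F(\pm\nabla^k\phi)\in\LL^1(\Omega)$, and set $\phi_\e=\Phi_\e\star\phi$, where $\Phi_\e$ is the standard mollifier; note that, since $\nabla^k\phi$ vanishes on the nonempty open set $\Omega\setminus\supp\phi$, the integrability hypothesis forces $F(0)<\infty$, whence, extending $\phi$ by zero, $F(\pm\nabla^k\phi)\in\LL^1_\loc(\R^n)$. For $\e<\dist(\supp\phi,\R^n\setminus\Omega)$ one has $\phi_\e\in C^\infty_c(\Omega)$ and $\nabla^k\phi_\e=\Phi_\e\star\nabla^k\phi$, so the identity from the previous step gives $\int_\Omega\langle F'(\nabla^k u),\nabla^k\phi_\e\rangle\d x=0$ for all such $\e$; the whole point is then to pass to the limit $\e\searrow0$.

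For the limit I would construct an $\LL^1$-dominating sequence. The Fenchel–Young inequality yields, pointwise almost everywhere and using $F\geq\theta(|\cdot|)\geq0$,
\[
\big|\langle F'(\nabla^k u),\nabla^k\phi_\e\rangle\big|\leq F(\nabla^k\phi_\e)+F(-\nabla^k\phi_\e)+F^\ast\big(F'(\nabla^k u)\big),
\]
and Jensen's inequality together with the convexity of $F$ gives $F(\pm\nabla^k\phi_\e)\leq\Phi_\e\star F(\pm\nabla^k\phi)$. Hence the left-hand side is dominated by $g_\e:=\Phi_\e\star F(\nabla^k\phi)+\Phi_\e\star F(-\nabla^k\phi)+F^\ast(F'(\nabla^k u))$, which converges in $\LL^1(\Omega)$ to $g_0:=F(\nabla^k\phi)+F(-\nabla^k\phi)+F^\ast(F'(\nabla^k u))$. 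Along a subsequence $\e_j\searrow0$ one has $\nabla^k\phi_{\e_j}\to\nabla^k\phi$ almost everywhere, and since $F(\nabla^k u)\in\LL^1(\Omega)$ forces $\nabla^k u(x)\in\tp{dom}(F)$ for a.e.\ $x$ while $F'$ is continuous on the open set $\tp{dom}(F)$, this gives $\langle F'(\nabla^k u),\nabla^k\phi_{\e_j}\rangle\to\langle F'(\nabla^k u),\nabla^k\phi\rangle$ almost everywhere. The generalised dominated convergence theorem, with the $\LL^1$-convergent majorants $g_{\e_j}\to g_0$, then gives $\langle F'(\nabla^k u),\nabla^k\phi\rangle\in\LL^1(\Omega)$ and
\[
\int_\Omega\big\langle F'(\nabla^k u),\nabla^k\phi\big\rangle\d x=\lim_{j\to\infty}\int_\Omega\big\langle F'(\nabla^k u),\nabla^k\phi_{\e_j}\big\rangle\d x=0,
\]
the last equality because each $\phi_{\e_j}\in C^\infty_c(\Omega)$. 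This is the assertion.

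The manipulations with the mollifier, Jensen's inequality and the Fenchel–Young inequality are entirely routine; the real content is the membership $F^\ast(F'(\nabla^k u))\in\LL^1(\Omega)$ delivered by Theorem~\ref{thm:mainConstr}, which is precisely what makes the dominating function $g_0$ integrable and legitimises the limit. So I expect the main obstacle not to lie in this corollary at all but to have been resolved upstream: establishing that $\LL^1$-bound on $F^\ast(F'(\nabla^k u))$ — equivalently, the integrability of $\langle F'(\nabla^k u),\nabla^k u\rangle$ — is where the genuine difficulty is.
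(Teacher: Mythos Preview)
Your argument is correct and follows essentially the same route as the paper: apply Theorem~\ref{thm:mainConstr} in the unconstrained case to obtain $F^\ast(F'(\nabla^k u))\in\LL^1(\Omega)$ and the Euler--Lagrange identity for smooth compactly supported test maps, then mollify an admissible $\phi$ and pass to the limit using the Fenchel--Young inequality combined with Jensen's inequality for the convolution. The paper phrases the final passage to the limit via Vitali's convergence theorem rather than the generalised dominated convergence theorem, but this is an immaterial difference.
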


\begin{proof}[Proof of Theorem \ref{thm:mainConstr}]
Using Lemma \ref{lem:approximations} we obtain $F_s$ such that $F_s\nearrow F$ pointwise and locally uniformly on $\tp{dom}(F)$. Further $F'_s\to F'$ pointwise and locally uniformly on $\tp{dom}(F)$.

We begin by proving that
\begin{align*}
f_j\equiv\inf\left\{\int_\Omega F_j(\nabla^k v)\d x\colon v\in \WW^{k,1}_g(\Omega)\cap K\right\}\nearrow \int_\Omega F(\nabla^k u)\d x\equiv f.
\end{align*}
Clearly $f_j$ is an increasing sequence and we may take $v_j\in \WW^{k,1}_g(\Omega)\cap K$ such that
\begin{align*}
\int_\Omega F_j(\nabla^k v_k)\d x<\frac 1 j+f_j.
\end{align*}
Considering \eqref{H1Constr} and $f_j\leq f<\infty$, $(v_j)$ is bounded in $\WW^{k,1}(\Omega)$. In particular, we may extract a subsequence, not relabelled, such that
\begin{align*}
\nabla^i v_j\overset{\ast}{\rightharpoonup}\nabla^i v \text{ weakly}\ast \text{ in } BV(\Omega) \text{ and } \nabla^k v_j\overset{Y}{\rightharpoonup}\nu \text{ as } \LL^1-\text{Young-measures}.
\end{align*}
where $v\in BV(\Omega)$, $i\in\{0,\ldots,k-1\}$ and $\nu=((\nu_x)_{x\in\Omega},\lambda,(\nu_x^\infty)_{x\in\overline\Omega})$ is a generalised Young-measure.
 We remark that an implication is that, for any integer $s>1$,
\begin{align*}
\int_\Omega \big\langle F_s,\nu_x\big\rangle\d x+\int_{\overline\Omega} \big\langle F_s^\infty,\nu_x^\infty\big\rangle\d \lambda = \lim_{j\to\infty} \int_\Omega F_s(\nabla^k v_j)\d x\leq \lim_{j\to\infty} F_j(\nabla^k v_j)\d x.
\end{align*}
Recalling that $F_j^\infty\nearrow F^\infty$, by monotone convergence, we may deduce that
\begin{align*}
\lambda = 0 \text{ and } \int_\Omega \big\langle F,\nu_x\big\rangle\d x\leq \lim_{j\to\infty} f_j.
\end{align*}
Using standard results, see \cite{Alibert1997}, we deduce that $(\nabla^k v_j)$ is equi-integrable on $\Omega$, so that $v_j\rightharpoonup v$ weakly in $\WW^{k,1}(\Omega)$, where $v\in \WW^{k,1}_g(\Omega)\cap K$ by Mazur's Lemma and since $K$ is closed. By another standard result, the centre of mass of $\nu_x$ is $\nabla^k v(x)$ for almost all $x$. But now Jensen's inequality and the above allow us to conclude
\begin{align*}
\int_\Omega F(\nabla^k v)\d x\leq \lim_{j\to\infty} f_j
\end{align*}
and thus by minimality of $u$ we have proven our claim.

In particular, we may now write
\begin{align*}
\int_\Omega F_j(\nabla^k u)\d x\leq \int_\Omega F(\nabla^k u)=\e_k^2+\inf\left\{\int_\Omega F_k(\nabla^k v)\colon v\in \WW^{k,1}(\Omega)\cap K\right\},
\end{align*}
where $\e_k\searrow 0$. By Ekeland's variational principle we can find $u_j\in \WW^{k,1}_g(\Omega)\cap K$ such that
\begin{align*}
\int_\Omega |\nabla^k u-\nabla^k u_j|\d x\leq \e_j \quad \int_\Omega F_j(\nabla^k u_j)\d x\leq \int_\Omega F_j(\nabla^k u)\d x
\end{align*}
and
\begin{align*}
\int_\Omega F'_j(\nabla^k u_j)[\nabla^k(v-u_j)]\d x\geq -\e_j\int_\Omega |\nabla(v-u_j)|\d x.
\end{align*}
for all $v\in \WW^{k,1}_g(\Omega)\cap K$.

We put $\sigma_j = F'_j(\nabla^k u_j)$ and $\sigma = F'(\nabla^k u)$. Note that, as $F(\nabla^k u)\in \LL^1(\Omega)$, it must hold that ${\nabla^k u\in \tp{dom}(F)}$ almost everywhere in $\Omega$. In particular, the second definition makes sense, since $F$ is essentially smooth.

Note that $\sigma_j\in \LL^\infty(\Omega)$ (as $F_j$ is Lipschitz-continuous) and furthermore
\begin{align}\label{eq:dualityRelationConstr}
\langle \sigma_j,\nabla u_j\rangle = F^\ast_j(\sigma_j)+F_j(\nabla^k u_j)
\end{align}
holds almost everywhere in $\Omega$. In particular, this implies that $F^\ast_j(\sigma_j)\in \LL^1(\Omega)$. We further comment that $F^\ast_j$ is an extended real-valued, lower semi-continuous and convex integrand. Finally
\begin{align}\label{eq:convMeasureSigmaConstr}
\sigma_j\to \sigma \text{ in measure on } \Omega.
\end{align}
To reach this conclusion, we again use that $\nabla^k u\in \tp{dom}(F)$ almost everywhere in $\Omega$. It is also not difficult to check that
$F^\ast_j(\sigma_j)\searrow F^\ast(\xi)$ as $j\nearrow \infty$ pointwise in $\xi$ and consequently we deduce that
\begin{align}\label{eq:convMeasureFConstr}
F^\ast_j(\sigma_j)\to F^\ast(\sigma) \text{ in measure on } \Omega.
\end{align}

We note that
\begin{align*}
F^\ast_k(\xi)\geq F^\ast(\xi) \geq r|\xi|
\end{align*}
for all $\xi\in \M$ and $k>1$. Now as $g\in \WW^{k,1}(\Omega)\cap K$,
\begin{align*}
\int_\Omega \big\langle \sigma_j, \nabla u_j\big\rangle \d x\leq \int_\Omega \big\langle \sigma_j,\nabla g\big\rangle\d x+\e_j\int_\Omega |\nabla^k u_j-\nabla^k g|\d x,
\end{align*}
so that using \eqref{eq:dualityRelationConstr} we find, recalling that $F^\ast_j(\sigma_j)\in \LL^1(\Omega)$,
\begin{align*}
\int_\Omega F^\ast_j(\sigma_j)\d x\leq& \int_\Omega \big\langle \sigma_j,\nabla g\big\rangle+\e_j|\nabla^k u_j-\nabla^k g|-F_j(\nabla^k u_j)\d x\\
\leq& \int_\Omega \frac 1 s \langle \sigma_j,s\nabla g\rangle + 1\d x\\
\leq& \int_\Omega \frac 1 2 F^\ast_j(\sigma_j)+\frac 1 s F_j(s\nabla g)+1\d x.
\end{align*}
In particular,
\begin{align*}
r\int_\Omega |\sigma_j|\d x\leq \int_\Omega F^\ast_j(\sigma_j)\d x\leq \frac 1 {s-1}\int_\Omega F(s\nabla g)+s.
\end{align*}
Thus we obtain equi-integrability of $(\sigma_j)$ on $\Omega$ and hence \eqref{eq:convMeasureSigmaConstr} and Vitali's convergence theorem ensure that $\sigma\in \LL^1(\Omega)$ and
\begin{align*}
\sigma_j\to \sigma \text{ strongly in } \LL^1(\Omega).
\end{align*}

Using the duality relation, \eqref{eq:convMeasureFConstr} and Fatou's lemma, we deduce that ${\langle \sigma,\nabla u\rangle\in \LL^1(\Omega)}$. Consequently $\langle \sigma_j,\nabla u_j\rangle\to \langle \sigma,u\rangle$ in $\LL^1(\Omega)$. Further using Fatou's lemma, $F^\ast(\sigma)\in \LL^1(\Omega)$. 

As a consequence, we deduce that for any $v\in K$ such that $u-v\in\WW^{k,\infty}_0(\Omega)$,
\begin{align*}
\int_\Omega F'(\nabla u)[\nabla(v-u)]\geq 0.
\end{align*}
Thus the proof is complete.
\end{proof}

\begin{proof}[Proof of Corollary \ref{cor:mainConstr}]
Consider $u\in \WW^{k,1}_g(\Omega)\cap K$, a minimiser. By the main theorem, $u$ is an energy-extremal and $F^\ast(F'(\nabla^k u))\in \LL^1(\Omega)$. Since $u$, is energy-extremal we have that
\begin{align*}
\int_\Omega \big\langle \sigma,\nabla^k(v-u)\big\rangle\d x\geq 0
\end{align*}
for all $v\in K$ such that $u-v\in \WW^{k,\infty}_0(\Omega)$, where for brevity we write $\sigma = F'(\nabla^k u)$. Fix $v\in \WW^{k,1}_g(\Omega)\cap K$ such that $F(\pm \nabla^k v)\in \LL^1(\Omega)$. Extend $v$ and $\psi$ to $\WW^{k,1}(\R^n,\R^N)$ functions still denoted $v$ and $\psi$ respectively. Let $(\Psi_\e)_{\e>0}$ be a family of standard smooth mollifier and note that for sufficiently small $\e>0$, $\psi\star\Psi_\e,v\star \Psi_\e\in C^\infty_c(\Omega,\R^N)$. Further note by direct calculation that 
\begin{align*}
v\star\Psi_\e-\psi\star\Psi_\e+\psi\geq 0 \text{ a.e. in } \Omega.
\end{align*}
In particular for such $\e>0$,
\begin{align}\label{eq:ELConstr}
\int_\Omega \big\langle \sigma,\nabla(v\star\Psi_\e-\psi\star\Psi_\e+\psi-u)\big\rangle\d x \geq 0.
\end{align}
Now using Young's inequality,
\begin{align}\label{eq:absValueConstr}
\langle \sigma,\pm\nabla(v\star\Psi_\e)\rangle \leq F^\ast(\sigma)+F(\pm \nabla^k(v\star\Psi_\e))\nonumber\\
\langle \sigma,\pm\nabla(\psi\star\Psi_\e)\rangle \leq F^\ast(\sigma)+F(\pm \nabla^k(\psi\star\Psi_\e))
\end{align}
almost every in $\Omega$. In particular, combining these inequalities,
\begin{align*}
\left|\langle \sigma,\pm\nabla(v\star\Psi_\e)\rangle\right| \leq F^\ast(\sigma)+F(\nabla^k(v\star\Psi_\e))+F(-\nabla^k(v\star\Psi_\e))\nonumber\\
\left|\langle \sigma,\pm\nabla(\psi\star\Psi_\e)\rangle\right| \leq F^\ast(\sigma)+F(\nabla^k(\psi\star\Psi_\e))+F(-\nabla^k(v\star\Psi_\e))
\end{align*}
Using Jensen's inequality and Fatou's inequality, we deduce in a routine manner that $F(\pm \nabla^k(v\star\Psi_\e))\to F(\pm\nabla^k v)$ strongly in $\LL^1(\Omega)$ as $\e\to 0$ as well as that we have $F(\pm \nabla^k(\psi\star\Psi_\e)\to F(\pm \nabla^k \psi)$ strongly in $\LL^1(\Omega)$. In particular, using \eqref{eq:absValueConstr}, Fatou's lemma and Vitali convergence theorem,
\begin{align*}
\langle \sigma,\nabla(v\star\Psi_\e-\psi\star\Psi_\e+\psi-u)\rangle \to \langle \sigma,\nabla v-u\rangle
\end{align*}
as $\e\searrow 0$. Hence we may pass to the limit in \eqref{eq:ELConstr} to conclude the proof.
\end{proof}

\section{BV-minimisers}\label{sec:BV}
For the convenience of the reader we recall the set-up we work with in this section.
We consider the functional
\begin{align*}
\F(v)=\int_\Omega F(\nabla^k v)\d x
\end{align*}
defined on $\WW^{k,1}_g(\Omega)$. We assume that $F(\cdot)$ is a convex, extended real-valued integrand satisfying moreover the linear bound,
\begin{align}
F(\xi)\geq a(\xi)+c|\xi|\tag{H2}
\end{align}
for some linear $a(\cdot)$ and $c>0$.

We extend $\F(\cdot)$ to $\BV^k(\Omega)$ as follows:
Let $\Omega\Subset\Omega'$ and $g\in W^{k,1}(\Omega')$ such that $\int_{\Omega'\setminus\Omega} F(\nabla^k g)\d x<\infty$. We then define for $u\in \BV^k(\Omega)$,
\begin{align*}
\F(u)&=\inf\big\{\liminf \int_{\Omega'}F(\nabla^k u_j)\d x\colon (u_j)\in X\big\}\\
X&=\{(u_j)\colon u_j\in \WW^{k,1}(\Omega'),\nabla^i u_j\wstar \nabla^i u \text{ in } \BV(\Omega) \text{ for } i=0,\ldots,k-1,\\
&\qquad u_j\to g \text{ in } \WW^{k,1}(\Omega'\setminus\Omega)\}.
\end{align*}

We prove the following result:
\begin{theorem}
Let $\Omega$ be a Lipschitz domain. Suppose that $F(\cdot)$ is strictly convex and satisfies \eqref{eq:demiCoerc}. Let $u\in \BV^k(\Omega)$ be a minimiser of $\F(\cdot)$. Then $F'(\nabla^k u)$ is divergence-free in the sense of distributions. Moreover $\langle F'(\nabla^k u,\overline\nu_x\rangle\in L^1(\Omega')$
\end{theorem}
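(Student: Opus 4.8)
The plan is to transplant the scheme of the proof of Theorem~\ref{thm:mainConstr} to the $\BV^k$-setting, using the representation formula of Theorem~\ref{thm:repr} as the organising device and strict convexity of $F$ to upgrade weak to strong convergence of the approximating $k$-th order gradients. After subtracting an affine function (which changes $\F$ only by a constant over the admissible class, since an admissible competitor equals $g$ near $\p\Omega'$, and does not affect whether $F'(\nabla^k u)$ is divergence-free) I may assume $F(\xi)\geq c|\xi|$; since $F$ is real-valued we have $\tp{dom}(F)=\M$, so Lemma~\ref{lem:linear} yields $F^\ast(\xi)\geq|\xi|-C$ for a suitable $C$, and $\sigma\coloneqq F'(\nabla^k u)$ is well defined. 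Lemma~\ref{lem:approximations} furnishes smooth convex $F_j\nearrow F$ of linear growth with $F_j(\xi)\geq c|\xi|$, $F_j^\infty\nearrow F^\infty$, $F_j\to F$ and $F_j'\to F'$ locally uniformly, and $F_j^\ast\geq F^\ast$. By Theorem~\ref{thm:repr} applied to each $F_j$ the relaxed $F_j$-functional equals $\F_j(v)=\int_{\Omega'}F_j(\nabla^k v)\d x+\int_{\Omega'}F_j^\infty(\D_s^k v)$. Exactly as in the first part of the proof of Theorem~\ref{thm:mainConstr}---lower semicontinuity against a fixed $F_{j_0}$, monotone convergence in $j_0$, and Jensen's inequality for the barycentres of the generated generalised Young measures---one obtains $m_j\coloneqq\min_{\BV^k(\Omega)}\F_j\nearrow m\coloneqq\F(u)$.

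Next I would apply Ekeland's variational principle on $\WW^{k,1}_g(\Omega)$ to the functional $v\mapsto\int_\Omega F_j(\nabla^k v)\d x$, whose infimum equals $m_j-\int_{\Omega'\setminus\Omega}F_j(\nabla^k g)\d x$ by the relaxation property and therefore converges to $m-\int_{\Omega'\setminus\Omega}F(\nabla^k g)\d x$. Starting from an $\e_j^2$-almost minimiser with $\e_j\searrow0$ this yields $u_j\in\WW^{k,1}_g(\Omega)$ with $\int_\Omega F_j(\nabla^k u_j)\d x\leq m_j-\int_{\Omega'\setminus\Omega}F_j(\nabla^k g)\d x+\e_j^2$ and the approximate Euler--Lagrange inequality $\int_\Omega\langle\sigma_j,\nabla^k(v-u_j)\rangle\d x\geq-\e_j\int_\Omega|\nabla^k(v-u_j)|\d x$ for all $v\in\WW^{k,1}_g(\Omega)$, where $\sigma_j\coloneqq F_j'(\nabla^k u_j)\in\LL^\infty(\Omega)$. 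Testing with $v=g$, using the a.e.\ duality identity $\langle\sigma_j,\nabla^k u_j\rangle=F_j^\ast(\sigma_j)+F_j(\nabla^k u_j)$ (valid since $\sigma_j=F_j'(\nabla^k u_j)$) and a Young-inequality absorption at some $s>1$---the analogue of the estimate leading to $r\int_\Omega|\sigma_j|\leq\frac{1}{s-1}\int_\Omega F(s\nabla^k g)+s$ in Theorem~\ref{thm:mainConstr}, except that the quantity $\e_j|\nabla^k u_j|-F_j(\nabla^k u_j)$ is now absorbed directly from $F_j(\xi)\geq c|\xi|$ as soon as $\e_j<c$---produces a bound on $\int_\Omega F_j^\ast(\sigma_j)\d x$ uniform in $j$, hence on $\int_\Omega|\sigma_j|\d x$ and in fact equi-integrability of $(\sigma_j)$ in $\LL^1(\Omega)$; if $F$ additionally obeys a linear upper bound, then $F_j=\Phi_{\delta_j}\star F-\mu_j$ for $j$ large and $(\sigma_j)$ is even uniformly bounded in $\LL^\infty(\Omega)$.

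Strict convexity enters in identifying the limit. Since $\int_\Omega F_j(\nabla^k u_j)\d x$ is bounded and $F_j(\xi)\geq c|\xi|$, the $u_j$ are bounded in $\WW^{k,1}(\Omega)$, so along a subsequence $\nabla^i u_j\wstar\nabla^i w$ in $\BV$ for $i<k$ and $\nabla^k u_j\overset{Y^\ast}{\rightharpoonup}\nu=((\nu_x),\lambda,(\nu_x^\infty))$, with $w$ a minimiser of $\F$. The equality cases recorded in the corollary following Theorem~\ref{thm:repr} force $\langle\nu_x,F\rangle=F(\overline\nu_x)$ a.e.\ and affineness of $F$ along a non-degenerate segment wherever the absolutely continuous part of $\lambda$ is non-zero; since $F$ is strictly convex this forces $\nu_x=\delta_{\overline\nu_x}$ a.e.\ and $\lambda$ purely singular, so that $\overline\nu_x=\nabla^k w$ a.e., and strict convexity applied to the minimisers $w$ and $u$ gives $\nabla^k w=\nabla^k u$. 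Therefore $\nabla^k u_j\to\nabla^k u$ in measure on $\Omega$, and since $F_j'\to F'$ locally uniformly on $\M$ one deduces $\sigma_j\to\sigma$ in measure, cf.~\eqref{eq:convMeasureSigmaConstr}.

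Combining this with the $\LL^1$-equi-integrability (or the uniform $\LL^\infty$-bound) and Vitali's theorem gives $\sigma_j\to\sigma$ strongly in $\LL^1(\Omega)$, $\sigma\in\LL^1(\Omega)$, $F^\ast(\sigma)\in\LL^1(\Omega)$ and $\langle\sigma,\nabla^k u\rangle\in\LL^1(\Omega)$. Passing to the limit in the approximate Euler--Lagrange inequality, tested against $\phi$ and $-\phi$ with $\phi\in\WW^{k,\infty}_0(\Omega)$, yields $\int_\Omega\langle\sigma,\nabla^k\phi\rangle\d x=0$, i.e.\ $F'(\nabla^k u)$ is divergence-free in the sense of distributions; the integrability of the pairing $\langle F'(\nabla^k u),\overline\nu_x\rangle$ then follows from the duality identity in the limit together with the equality cases of the same corollary, which control the recession contribution $\overline\nu_x^\infty$. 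I expect the main obstacle to be the uniform control of $(\sigma_j)$: in Theorem~\ref{thm:mainConstr} it rested on the hypothesis $F(s\nabla^k g)\in\LL^1(\Omega)$ for some $s>1$, which is unavailable here, so the uniform bound on $\int_\Omega F_j^\ast(\sigma_j)\d x$ has to be extracted from the linear growth of $F$---equivalently the superlinearity of $F^\ast$ coming from $\tp{dom}(F)=\M$---in conjunction with the representation formula; a secondary technical point is to verify that the relaxed infima are genuinely realised by $\WW^{k,1}_g(\Omega)$-sequences and that the singular part of the limiting Young measure is matched by $\D_s^k u$, which is exactly where strict convexity is used.
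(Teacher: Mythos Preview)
Your scheme matches the paper's proof: approximations $F_j$ from Lemma~\ref{lem:approximations}, $m_j\nearrow m$ via generalised Young measures and monotone convergence, Ekeland's principle to produce approximate extremals with $\sigma_j=F_j'(\nabla^k u_j)$, a uniform bound on $\int F_j^\ast(\sigma_j)$ by testing against the boundary datum and absorbing with Young's inequality at a parameter $s>1$, and then convergence in measure of $\sigma_j$ combined with Vitali.

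Two points deserve comment. First, the step you flag as a ``secondary technical point'' is exactly where the paper does substantive work that you omit: it takes a recovery sequence $u_l$ for $\F(u)$, composes with the diffeomorphisms $\Psi_s$ of Lemma~\ref{lem:diffeos} and applies a cut-off $\eta_s$ to produce competitors equal to $g^{s_l}$ near $\p\Omega'$, and only then applies Ekeland on $\Omega'$ in the corresponding Dirichlet class. This boundary correction is what guarantees that the relaxed infimum is approached by Sobolev maps with the right trace, and it cannot simply be invoked as a consequence of ``the relaxation property''. Second, your concern about the $(\sigma_j)$-bound is misplaced: the paper carries out precisely the absorption you describe, arriving at $\int_{\Omega'}F_j^\ast(\sigma_j)\,\d x\leq\tfrac{c}{s-1}\int_{\Omega'}F(s\nabla^k g)\,\d x+C$, so the hypothesis $F(s\nabla^k g)\in\LL^1$ for some $s>1$ is tacitly in force here as well---there is no independent route via superlinearity of $F^\ast$ alone. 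A minor difference in the identification step: the paper obtains $\nabla^k w_j\to\overline\nu_x$ a.e.\ by citing an external lemma rather than by deducing $\nu_x=\delta_{\overline\nu_x}$ from strict convexity and the equality case in the corollary to Theorem~\ref{thm:repr} as you do, but either argument is valid.
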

\begin{proof}
Let $\{F_j\}$ be the family of approximations to $F$ constructed in Lemma \ref{lem:approximations}. 
We first show that
$$
f_j\equiv \inf\{\F_j(v)\colon v\in \WW^{k,1}(\Omega)\}\nearrow \F(u)=f.
$$
Clearly $\{f_j\}$ is increasing in $j$ and $f_j\leq f$.
Pick $v_j\in \WW^{k,1}(\Omega')$ such that $v_j\to g$ in $\WW^{k,1}(\Omega'\setminus\Omega)$ and $\F_j(v_j)\leq f_j+\frac 1 j$. Due to \eqref{eq:demiCoerc}, $(v_j)$ is bounded in $\WW^{k,1}(\Omega')$ and we may extract a subsequence such that $\nabla^i v_j\wstar \nabla^i v$ in $\BV(\Omega')$ for $i=0,\ldots,k-1$ and some $v\in \BV^k(\Omega')$, and $\nabla^k v_j\overset{Y}\to\nu$ as $L^1$-Young measures where $\nu=((\nu_x)_{x\in\Omega'},\lambda,(\nu_x^\infty)_{x\in\overline{\Omega'}})$. We further have for any integer $s>0$, using Theorem \ref{thm:repr},
\begin{align*}
\lim_{j\to\infty} f_j \geq \lim_{j\to\infty} \int_{\Omega'} F_s(\nabla v_j)\d x=\int_{\Omega'} \langle F_j,\nu_x\rangle\d x+\int_{\overline\Omega'} \langle F_j^\infty,\nu_x^\infty\rangle\d\lambda=\F_k(v).
\end{align*}
Taking $k\to\infty$, we deduce that $\F(v)\leq \F(u)$ and hence by minimality of $u$ we have the required claim.

Take $u_l\in \WW^{k,1}(\Omega')$ such that $\nabla^i u_l\wstar \nabla^i u$ in $\BV(\Omega)$ for $i=0,\ldots,k-1$, $u_l\to g$ in $\WW^{k,1}(\Omega'\setminus\Omega)$ and $\F(u_l)\leq \F(u)+\frac 1 l$. Let $\{\Psi_s\}$ be the family of diffeomorphisms defined in Lemma \ref{lem:diffeos}. Set $u_l^s(x)=u_l(\Psi_s(x)$ and $g^s(x)=g(\Psi_s(x))$. Note that then for any fixed $i$,
\begin{align*}
\int_{\Omega'} F_i(\nabla^k u_l^s)\d x\to \int_{\Omega'} F_i(\nabla^k u_l)\d x
\end{align*}
as $s\nearrow 1$. Further note that $u_l^s\to g^s$ in $W^{k,1}(\Omega'\setminus \Psi_s(\Omega))$. Let $\eta_s$ be a smooth cut-off function supported on $\Omega'\setminus \Psi_s(\Omega)$, with $\eta_s=1$ on $\Omega'\setminus\Omega$, and $|\nabla^j\eta_s|\leq c(n)d(\p\Omega,\p \Psi_s(\Omega))^{-j}$ for $j=1,\ldots,k$. Introduce 
$$
w_l^s = u_l^s+\eta_s(g^s-u_l^s).
$$
Note that $w_l^s = g^s$ near $\p\Omega'$.

We estimate
\begin{align*}
\left|\int_{\Omega'} F_i(\nabla^k w_l^s)-F_i(\nabla^k u_l^s)\d x\right|\leq& i \sum_{j=1}^{k}\int_{\Omega\setminus \Psi_s(\Omega))} \binom{k}{j} |\nabla^j \eta| |\nabla^{k-j}(g^s-u_l^s)|\d x\\
&\quad+i\int_{\Omega'\setminus\Psi_s(\Omega)} |\nabla^k(g^s-u_l^s)|\d x \coloneqq A.
\end{align*}
Noting that $\|\nabla^{k-j}g^s-u_l^s\|_{L^1(\Omega\setminus\Psi_s(\Omega)}\leq c |\Omega\setminus\Psi_s(\Omega)|^j\|\nabla^k(g^s-u_l^s)\|_{L^1(\Omega\setminus\Psi_s(\Omega)}$ and that $|\Omega\setminus\Psi_s(\Omega)|\sim d(\p\Omega,\p\Psi_s(\Omega))$, we deduce that $A\to 0$ as $l\to \infty$. Hence
\begin{align*}
\F_i(w_l^s)\to \F_i(u_l^s)
\end{align*}
as $l\to\infty$. Extracting a diagonal subsequence we have found a sequence $u_l\in \WW^{k,1}(\Omega')$ such that $\nabla^i u_l\wstar \nabla^i u$ in $BV(\Omega)$ for $i=0,\ldots,k-1$, $u_l\to g$ in $\WW^{k,1}(\Omega)$, $u_l = g^{s_l}$ on $\p\Omega'$ in the sense of traces where $s_l\nearrow 1$ as $l\to\infty$ and
\begin{align*}
\F(u_l)\leq \F(u)+\frac 2 l.
\end{align*}

We may thus write
\begin{align*}
\int_{\Omega'} F_i(\nabla^k u_l)\d x\leq \int_{\Omega'} F(\nabla^k u_l)\d x\leq \F(u)+\frac 2 l=f_i+\frac 2 l+\e_i^2
\end{align*}
where $\e_\searrow 0$. By Ekeland's variational principle, we obtain $w_l^i\in W^{k,1}(\Omega)$ such that with $\e_l^i=\sqrt{\frac 2 l+\e_i^2}$,
\begin{align}\label{eq:Ekeland}
\int_{\Omega'}|\nabla^k w_l^i-\nabla^k u_l|\d x\leq \e_l^i, \quad \int_{\Omega'} F_i(\nabla^k w_l^i)\d x\leq \int_{\Omega'} F_i(\nabla^k u_l)\d x
\end{align}
and
\begin{align*}
\int_{\Omega'} F'_i(\nabla^k w_l^i)[\nabla^k\phi]\d x\leq \e_l^i\int_\Omega |\nabla^k\phi|\d x
\end{align*}
for all $\phi\in W^{k,1}_0(\Omega')$.

Note that
\begin{align*}
\F_i(v_i)\leq \F(w_l^i)\leq \F(u)+(\tilde\e_l^i)^2.
\end{align*}
Letting $i,l\to\infty$, we see that $\F(w_l^i)\to \F(u)$. In particular, we deduce, using also \eqref{eq:Ekeland}, that there is a subsequence, denoted $w_j$, such that
\begin{align*}
\nabla^i w_j\wstar \nabla^i u \text{ in } \BV(\Omega') \text{ for } i=0,\ldots,k-1, \, \nabla^k w_j\overset{Y}\rightarrow \nu \text{ as } L^1-\text{Young measures}.
\end{align*}
Moreover, we have that $\nabla w_j^k\to\overline\nu_x$ almost everywhere in $\Omega'$, see Lemma 2.10 in \cite{Kristensen2020}. In particular, $\sigma_j=F'(\nabla^k w_j)\to \sigma = F'(\overline\nu_x)$ almost everywhere in $\Omega'$.

By Lipschitz continuity of $F_j$, $\sigma_j\in L^\infty(\Omega')$ and we have the extremality relation
\begin{align*}
\langle \sigma_j,\nabla^k w_j\rangle = F^\ast_j(w_j)+F_j(\nabla^k w_j),
\end{align*}
valid almost everywhere in $\Omega'$. It is not hard to check that $F^\ast_j(\xi)\searrow F^\ast(\xi)$ as $j\to\infty$ pointwise in $\xi$, and thus, in particular, after adding an affine function to $F$ if necessary,
\begin{align}\label{eq:lowerBound}
F_j^\ast(\xi)\geq r |\xi|
\end{align}
for some $r>0$. We further note that
\begin{align*}
\int_{\Omega'} \big\langle \sigma_j, \nabla^k w_j\big\rangle\d x\leq \int_{\Omega'} \big\langle \sigma_j,\nabla^k g^{s_j}\big\rangle\d x+\tilde \e_j\int_{\Omega'}|\nabla^k w_j-\nabla^k g^{s_j}|\d x,
\end{align*}
where $\tilde \e_j=\e_{l_j}^{i_j}$. In particular, we may estimate
\begin{align*}
\int_{\Omega'} F_j^\ast(\sigma_j)\d x\leq& \int_{\Omega'} \big\langle \sigma_j,\nabla^k g^{s_j}\big\rangle\tilde+ \e_j|\nabla^k w_j-\nabla^k g^{s_j}|\d x-F_j(\nabla w_j)\d x\\
\leq& \int_{\Omega'} \frac 1 s\big\langle \sigma_j,s\nabla^k g^{s_j}\rangle+1\d x\\
\leq& \int_{\Omega'} \frac 1 s F_j^\ast(\sigma_j)+\frac 1 s F_j(s\nabla^k g^{s_j})+1.
\end{align*}
We note that
\begin{align*}
\int_{\Omega'} F_j(s\nabla^k g^{s_j})\d x = \int_{\Omega'} |\Psi_{s_j}|F_j(s\nabla^k g)\d x\to \int_{\Omega'} F_j(s \nabla^k g)\d x
\end{align*}
as $j\to\infty$. In particular, we deduce
\begin{align*}
\int_{\Omega'} F^\ast_j(\sigma_j)\d x\leq \frac c {s-1}\int_{\Omega'} F(s\nabla^k g)+1\d x.
\end{align*}
Hence $(\sigma_j)$ is equi-integrable on $\Omega'$ and thus by Vitali's convergence theorem, $\sigma_j\to \sigma$ in $L^1(\Omega')$. We deduce that
$\Div \sigma=0$ in the sense of distributions. 

By Fatou's lemma, $F^\ast(\sigma)\in L^1(\Omega')$ and using the duality relation
\begin{align*}
\langle \sigma_s,\nabla^k u_s\rangle = F^\ast_s(\sigma_s)+F_s(\nabla^k u_s)
\end{align*}
as well as again Fatou's lemma, we conclude that $\langle\sigma,\overline\nu_x\rangle\in L^1(\Omega')$.

Note that since $F$ is strictly convex, $\lambda$ is purely singular and consequently $\overline\nu_x = \nabla^k u$. This concludes the proof.
\end{proof}

\bibliographystyle{plain}
\bibliography{../bibtex/Euler}
\end{document}